\def\ResetSectionNames{%
  \def\chapterautorefname{Chapter}
  \def\sectionautorefname{Section}
  \def\subsectionautorefname{Section}
  \def\subsubsectionautorefname{Section}
  \def\figureautorefname{Figure}
  \def\indexname{List of symbols}
}
\declaretheoremstyle[spaceabove=\topsep,spacebelow=0pt,bodyfont=\normalfont]{scdef}
\declaretheoremstyle[spaceabove=\topsep,spacebelow=0pt,bodyfont=\itshape]{scthm}
\declaretheoremstyle[spaceabove=\topsep,spacebelow=0pt,headfont=\normalfont\itshape,notefont=\normalfont\itshape,notebraces={}{},headformat={\NAME\NOTE},postheadspace=1em,qed=\qedsymbol]{scprf}
\declaretheorem[style=scthm,numberwithin=section,name=Theorem,    refname={Theorem,Theorems},        Refname={Theorem,Theorems}]        {theorem}
\declaretheorem[style=scthm,unnumbered ,name=Theorem,    refname={Theorem,Theorems},        Refname={Theorem,Theorems}]        {theorem*}
\declaretheorem[style=scthm,sharenumber=theorem,     name=Lemma,      refname={Lemma,Lemmas},            Refname={Lemma,Lemmas}]            {lemma}
\declaretheorem[style=scthm,sharenumber=theorem,     name=Corollary,  refname={Corollary,Corollaries},   Refname={Corollary,Corollaries}]   {corollary}
\declaretheorem[style=scthm,sharenumber=theorem,     name=Proposition,refname={Proposition,Propositions},Refname={Proposition,Propositions}]{prop}
\declaretheorem[style=scdef,sharenumber=theorem,     name=Definition, refname={Definition,Definitions},  Refname={Definition,Definitions}]  {definition}
\declaretheorem[style=scdef,sharenumber=theorem,     name=Remark,     refname={Remark,Remarks},          Refname={Remark,Remarks}]          {rem}
\numberwithin{table}{section}
\newcommand{\cyclic}[1]{\stackrel{\scriptstyle #1}{\mathfrak{S}}}
\newcommand{\R}{\mathbb{R}}
\renewcommand{\H}{\mathcal{H}}
\newcommand\extalg{%
  \newlength{\len}%
  \settoheight{\len}{V}%
  \mathbin{%
    \resizebox{0.93\len}{0.93\len}{$\wedge$}%
    \kern-0.1em%
  }}%
\newcommand{\intprod}{\mathbin{\hbox to 0.7ex{%
      \kern-0.3ex
      \vrule height0.0777ex width0.971ex depth0ex
      \kern-0.055ex
      \vrule height1.165ex width0.0777ex depth0ex\hss}}%
}%
\begin{document}

\title{Canonical Submersions in Nearly Kähler Geometry}

\author{Leander Stecker}
\date{}
\maketitle

\begin{abstract}
	We explore submersions introduced by reducible holonomy representations of connections with parallel skew torsion. A submersion theorem extending previous, less general, results is given. As our main application we show that parallel $3$-$(\alpha,\delta)$-Sasaki manifolds admit $1$-dimensional submersions onto nearly Kähler orbifolds. As a secondary application we reprove that a given class of nearly Kähler manifolds submerges onto quaternionic Kähler manifolds. This new proof gives a direct expression for the quaternionic structure on the base.
\end{abstract}
\medskip

\textbf{Keywords:} Skew-Torsion Connection; Riemannian Submersions; $3$-$(\alpha,\delta)$-Sasaki Manifolds; Nearly Kähler Geometry; Quaternionic Kähler Manifolds;

\textbf{MSC:} 53B05; 53C15; 53C25; 53C29

%-----------------------------------------------------------------------------------------
\section{Introduction}
The investigation of holonomy groups has played a key role in Riemannian geometry over the last century. Thanks to de Rham splitting one can reduce the investigation to irreducible holonomy representations. It therefore is possible to classify them in the shape of Berger's list of special holonomies. While these geometries are interesting and widely studied, they are narrow and many, in particular odd dimensional, geometries fail to be represented.
An early generalization due to Gray were weak holonomy groups, extending to the classes of nearly Kähler, nearly parallel $G_2$ and nearly parallel $\mathrm{Spin}(9)$ manifolds \cite{GrayWeak, FriWeakSpin9}. We are foremost interested in the broader class of geometries that admit a metric connection with skew symmetric torsion, or geometries with skew torsion for short. These include the aforementioned classes of weak holonomies and are also of widespread interest in type II string theory, compare \cite{GMWSkew, FrIv, FriSpin9Skew}.

Unlike with the Levi-Civita connection, for connections with skew torsion there exist no general de Rham splitting so reducible representations are very much to be considered. In these cases the reducibility tells us a great deal about the geometry. In fact, under the assumption of parallel torsion R.~Cleyton, A.~Moroianu and U.~Semmelmann in \cite{CleyMorSemm} show that they always admit a locally defined Riemannian submersion. We refine their theorem giving us control over the submersion motivated by the case of $3$-$(\alpha,\delta)$-Sasaki manifolds in \cite{ADS20}. We will call them canonical submersions.

$3$-$(\alpha,\delta)$-Sasaki manifolds were defined by the condition $\mathrm{d}\eta_i =2\alpha\Phi_i + 2(\alpha-\delta) \xi_j\wedge\xi_k$ in \cite{AgrDil} as a generalization of $3$-Sasaki manifolds when $\alpha=\delta=1$. Pivotally they admit a canonical connection $\nabla$ with skew torsion.  In \cite{ADS20} it was shown that $\nabla$ gives rise to a canonical submersion in above sense, whose base admits a quaternionic Kähler, resp.~hyperkähler, structure. This divides them into positive, negative and degenerate $3$-$(\alpha,\delta)$-Sasaki structures, depending on the scalar curvature of the quaternionic Kähler base. In the positive realm there are various interesting cases of parameters. Apart from aforementioned $3$-Sasakian if $\alpha=\delta=1$ there is the secondary Einstein metric or so-called squashed $3$-Sasaki Einstein metric if $\delta=(2n+3)\alpha$, \cite{AgrDil}. We are particularly interested in the parallel case $\delta=2\alpha$. Here $\nabla$ parallelizes all Reeb vector fields. We will show that this yields apart from the submersion discussed in \cite{CleyMorSemm} and \cite{ADS20}, another admissible submersion of codimension $1$ onto nearly Kähler spaces. This is the first instance investigated of canonical submersions whose base is a geometry with non-vanishing torsion showing that this possibility is not an inconvenience, but a feature of the theory.

Nearly Kähler manifolds are almost Hermitian, non-Kähler manifolds, defined by the condition $(\nabla_X J)X=0$. They garnered particular attention in dimension $6$. In this dimension their cone has holonomy $G_2$, \cite{Bry}, they admit a real Killing spinor and are Einstein. Though this is not true for dimensions greater $6$, they are connected to many geometries in any dimension. For instance, they are known to exist on the twistor spaces of quaternionic Kähler manifolds, \cite{EellsSalamon, FriGru}.  It therefore shouldn't come as a surprise they can be obtained as a quotient of the Konishi space. However, our proof shows that the nearly Kähler structure and its canonical Hermitian connection are tightly locked with the parallel $3$-$(\alpha,\delta)$-Sasaki structure and its canonical connection. A second application closes the circle showing how a class of nearly Kähler manifolds, essentially those obtained by above construction, admit a further canonical submersion onto quaternionic Kähler spaces. This result was shown previously in the decomposition of nearly Kähler spaces of $\dim\geq 6$ by P.-A.~Nagy in \cite{Nagy} and locally in \cite{Alexandrov}. However, we include it since the proof fits neatly into our established framework of canonical submersions.

\section{Canonical Submersions}

On a Riemannian manifold $(M,g)$ a metric connection $\nabla$ on the tangent bundle is uniquely characterized by its torsion tensor
\begin{align*}
T(X,Y,Z)\coloneqq g(\nabla_XY-\nabla_YX-[X,Y],Z).
\end{align*}
By definition the torsion is skew-symmetric in the first two entries. If, however it is totally skew-symmetric we say that $\nabla$ has \emph{skew torsion}. In this case the connection is given by
\begin{align*}
g(\nabla_XY,Z)=g(\nabla^g_XY,Z)+\frac 12 T(X,Y,Z)
\end{align*}
where we denote by $\nabla^g$ the Levi-Civita connection of $(M,g)$. We require additionally that our connection $\nabla$ has parallel torsion, i.e.~$\nabla T=0$. This yields many simplifications, among them such connections satisfy the following Bianchi identity:
\begin{align}\label{Bianchi}
\cyclic{X,Y,Z}R^\nabla(X,Y,Z,V)=\sigma_T(X,Y,Z,V)\coloneqq\cyclic{X,Y,Z}g(T(X,Y),T(Z,V)),
\end{align}
where the shorthand notation $\cyclic{X,Y,Z}$ denotes the sum over all cyclic permutations of $X,Y,Z$.

With those preliminaries we pose our first main theorem. Given a manifold admitting a connection with parallel skew torsion and reducible holonomy then the Canonical Submersion Theorem answers the question how such a manifold can be simplified. Earlier more restrictive version of this theorem were presented in \cite{CleyMorSemm}, \cite{ADS20} and \cite{mythesis}. However, since the assumptions are simplified we include the proof.

\begin{theorem}\label{canonicalsub}
Suppose $\nabla$ is a metric connection with parallel skew torsion $T$ on $(M,g)$ and $TM=\mathcal{H}\oplus\mathcal{V}$ splits orthogonally as representation of the reduced holonomy group $\mathrm{Hol}_0(\nabla)$. Assume further that 
\begin{align}\label{projecttau}
T\in \Lambda^3\mathcal{H}\oplus\Lambda^2\mathcal{H}\!\wedge\!\mathcal{V}\oplus \Lambda^3\mathcal{V}\subsetneq \Lambda^3TM,
\end{align}
i.e.\ the $\Lambda^2\mathcal{V}\wedge\mathcal{H}$-part of $T$ vanishes.

Then there exists a locally defined Riemannian submersion $\pi\colon (M,g)\to (N,g_N)$ with totally geodesic fibers tangent to $\mathcal{V}$, the purely horizontal part of the torsion $T^\mathcal{H}$ is projectable, $\pi^*\check{T}\coloneqq T^\mathcal{H}$, and $\nabla^{\check{T}}=\nabla^{g_N}+\frac 12 \check{T}$ is a connection with parallel skew torsion on $N$ satisfying
\begin{align}\label{pinabla}
\nabla^{\check{T}}_XY=\pi_*(\nabla_{\overline{X}}\overline{Y}),
\end{align}
where $\overline{X},\overline{Y}$ denote the horizontal lifts of vector fields $X,Y\in \Gamma TN$.
\end{theorem}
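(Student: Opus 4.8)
The plan is to verify that the distribution $\mathcal{V}$ is integrable with totally geodesic leaves, invoke the local submersion it defines, and then transfer the connection data through $\pi$. First I would show $\mathcal{V}$ is autoparallel for $\nabla$: since $\mathcal{H}\oplus\mathcal{V}$ is $\mathrm{Hol}_0(\nabla)$-invariant and $\nabla$ is metric, both distributions are $\nabla$-parallel, so $\nabla_X Y\in\mathcal{V}$ whenever $Y\in\mathcal{V}$. Using $g(\nabla_XY,Z)=g(\nabla^g_XY,Z)+\tfrac12 T(X,Y,Z)$ and the hypothesis \eqref{projecttau} that the $\Lambda^2\mathcal{V}\wedge\mathcal{H}$-component of $T$ vanishes, I get that for $X,Y\in\mathcal{V}$ and $Z\in\mathcal{H}$ the torsion term $T(X,Y,Z)=0$, hence $g(\nabla^g_XY,Z)=g(\nabla_XY,Z)=0$; thus $\mathcal{V}$ is totally geodesic for the Levi-Civita connection. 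In particular $\mathcal{V}$ is integrable, its leaves are totally geodesic submanifolds, and locally the leaf space $N$ carries a smooth structure with $\pi\colon M\to N$ a Riemannian submersion whose fibers are exactly these leaves (the metric $g_N$ being the one making $\pi$ a Riemannian submersion, well-defined because $\mathcal{H}$ is $\nabla^g$-parallel along $\mathcal V$ as well — which follows the same way, swapping the roles, from the vanishing of the $\Lambda^2\mathcal{V}\wedge\mathcal{H}$ part).

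Next I would establish projectability of $T^\mathcal{H}$, the $\Lambda^3\mathcal{H}$-component. The natural route is to show $T^\mathcal{H}$ is basic: it is horizontal by construction, and I must show it is invariant under the flow of any vertical field, equivalently that $\mathcal{L}_V T^\mathcal{H}$ is again horizontal for $V\in\mathcal{V}$, or more cleanly that $T^\mathcal{H}$ is $\nabla$-parallel in vertical directions and that $\nabla$ and $\nabla^g$ differ only by $T$ which preserves the splitting in the relevant way. Concretely, $\nabla T=0$ gives $\nabla_V T=0$, and since the splitting $\Lambda^3 TM=\bigoplus \Lambda^p\mathcal H\wedge\Lambda^{3-p}\mathcal V$ is holonomy-invariant, each homogeneous component $T^{\mathcal H}$, etc., is separately $\nabla$-parallel. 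Then for horizontal lifts $\overline X,\overline Y,\overline Z$ of vector fields on $N$, I compute $V(T^\mathcal{H}(\overline X,\overline Y,\overline Z))$ using $\nabla T^\mathcal{H}=0$ and the O'Neill-type relation $\nabla_V\overline X = $ (vertical part) $+\,\mathcal{H}(\nabla_V\overline X)$, the last being controlled because $[V,\overline X]$ is vertical (horizontal lifts commute with vertical fields up to vertical terms) — giving that $V(T^\mathcal H(\overline X,\overline Y,\overline Z))=0$, so the function descends and defines $\check T\in\Lambda^3 T^*N$ with $\pi^*\check T=T^\mathcal H$.

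For the connection formula \eqref{pinabla}, I would first note that since the fibers are totally geodesic, O'Neill's $A$-tensor encodes the failure of $\mathcal{H}$ to be integrable and $\pi_*(\nabla^g_{\overline X}\overline Y) = \nabla^{g_N}_X Y$ holds at the level of the horizontal parts up to the $A$-term, but crucially $\pi_*(\mathcal{H}(\nabla^g_{\overline X}\overline Y)) = \nabla^{g_N}_X Y$ exactly (the vertical O'Neill term is killed by $\pi_*$). Then
\begin{align*}
\pi_*(\nabla_{\overline X}\overline Y) &= \pi_*\!\left(\nabla^g_{\overline X}\overline Y + \tfrac12 T(\overline X,\overline Y,\cdot)^\sharp\right) = \nabla^{g_N}_X Y + \tfrac12\,\pi_*\!\left(T^{\mathcal H}(\overline X,\overline Y,\cdot)^\sharp\right),
\end{align*}
using that the only component of $T(\overline X,\overline Y,\cdot)$ that is horizontal is $T^\mathcal{H}$ (the $\Lambda^2\mathcal H\wedge\mathcal V$ part is vertical, and by \eqref{projecttau} there is no $\Lambda^2\mathcal V\wedge\mathcal H$ part to contribute, while $\Lambda^3\mathcal V$ does not see two horizontal slots), and that $\pi_*$ annihilates vertical vectors. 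Since $\pi^*\check T = T^\mathcal{H}$, the right-hand side is $\nabla^{g_N}_X Y + \tfrac12\check T(X,Y,\cdot)^\sharp = \nabla^{\check T}_X Y$. Finally $\nabla^{\check T} T^{\check T}=0$ follows because $\nabla T^\mathcal{H}=0$ descends: $\nabla^{\check T}_X\check T$ pulls back to $\nabla_{\overline X}T^\mathcal{H}$ modulo terms that $\pi^*$ detects as zero, by the same projectability computation as above.

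The main obstacle I anticipate is the bookkeeping in the projectability step — carefully justifying that $\nabla T^{\mathcal H}=0$ together with the behaviour of horizontal lifts under vertical flows forces $T^{\mathcal H}$ to be basic rather than merely horizontal; this is where hypothesis \eqref{projecttau} is genuinely used a second time (beyond establishing total geodesy of the fibers), since a nonzero $\Lambda^2\mathcal V\wedge\mathcal H$ component would obstruct both the splitting of $\nabla^g$ along $\mathcal V$ and the descent of $T^{\mathcal H}$.
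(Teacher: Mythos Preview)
Your overall strategy matches the paper's, but there is a genuine gap in the projectability step for $T^{\mathcal H}$. You assert that $V(T^{\mathcal H}(\overline X,\overline Y,\overline Z))=0$ follows from $\nabla T^{\mathcal H}=0$ together with the fact that $[V,\overline X]$ is vertical. Carrying this out, however, gives
\[
V\bigl(T^{\mathcal H}(\overline X,\overline Y,\overline Z)\bigr)=\cyclic{\overline X,\overline Y,\overline Z} T^{\mathcal H}\bigl(\overline X,\overline Y,\nabla_V\overline Z\bigr),
\]
and while $\nabla_{\overline Z}V$ and $[V,\overline Z]$ are vertical, $\nabla_V\overline Z$ is \emph{horizontal} (by holonomy invariance of $\mathcal H$) and equals $T(V,\overline Z)$, which is nonzero as soon as the $\Lambda^2\mathcal H\wedge\mathcal V$-component of $T$ is. So the residual term is
\[
\cyclic{\overline X,\overline Y,\overline Z} T^{\mathcal H}\bigl(\overline X,\overline Y,T(V,\overline Z)\bigr)=-\sigma_T(\overline X,\overline Y,\overline Z,V),
\]
and nothing you have written forces this to vanish. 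The paper closes this gap via the first Bianchi identity \eqref{Bianchi} for connections with parallel skew torsion: since $\mathcal H$ and $\mathcal V$ are holonomy-invariant one has $R^{\nabla}(\overline X,\overline Y,\overline Z,V)=0$, hence $\sigma_T(\overline X,\overline Y,\overline Z,V)=0$. This is the missing idea, and it is not a consequence of \eqref{projecttau} alone; condition \eqref{projecttau} only ensures $T(V,\overline Z)\in\mathcal H$ so that $T^{\mathcal H}$ and $T$ agree on the relevant arguments.

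You were right to flag this as ``the main obstacle,'' but the tool you need is the Bianchi identity, not a further appeal to the torsion decomposition. The remaining parts of your outline (totally geodesic fibers, the Riemannian submersion, formula \eqref{pinabla}, and $\nabla^{\check T}\check T=0$) are essentially as in the paper.
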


\begin{proof}
We note that by \eqref{projecttau} and the invariance of $\mathcal{V}$ under $\nabla$ for any vertical vector fields $V,W\in \mathcal{V}$ we have
\begin{align*}
\nabla^g_VW=\nabla_VW-\frac 12T(V,W)\in\mathcal{V}.
\end{align*}
Therefore, the distribution $\mathcal{V}$ is integrable and a curve on the integral submanifold tangent to $\mathcal{V}$ is a geodesic if and only if it is a geodesic in $M$. The integral submanifolds give rise to a foliation and hence to a submersion $\pi$ from a small neighborhood $U\subset M$ to a local transverse section $S$. We show that the metric restricted to $\mathcal{H}\times\mathcal{H}$ is constant along vertical vector fields and therefore projectable. For $X,Y\in\Gamma\H$ we have
\begin{align*}
(L_Vg)(X,Y)&=V(g(X,Y))-g([V,X],Y)-g(X,[V,Y])=g(\nabla^g_XV,Y)+g(X,\nabla^g_YV)\\
&=g(\nabla_XV,Y)+g(X,\nabla_YV)=0
\end{align*}
since $\mathcal{V}$ is preserved by $\nabla$. This proves that $\pi$ is a Riemannian submersion.

To prove the second assertion we denote $T^\mathcal{H}=\mathrm{pr}_{\Lambda^3\mathcal{H}}T$. If we show that $T^\mathcal{H}$ is constant along the fibers it projects to a well-defined $3$-form $\check{T}$. Let $V\in \mathcal{V}$. Then
\begin{align}\label{Torsionintorsion}
\cyclic{X,Y,Z}T^\mathcal{H}(X,Y,T(V,Z))=0
\end{align}
whenever either $X,Y,Z\in \mathcal{V}$. Indeed, by \eqref{projecttau} we have that $T(V,Z)\in\mathcal{V}$ if $Z\in \mathcal{V}$ and, hence, $T(V,Z)\intprod T^\mathcal{H}=0$. Now consider $X,Y,Z\in\mathcal{H}$. Since $\nabla$ preserves $\mathcal{V}$ and $\mathcal{H}$ the curvature $R^{\nabla}(X,Y,Z,V)=0$ and the Bianchi identity for connections with parallel skew torsion, \eqref{Bianchi}, implies
\begin{align*}
0&=-\!\!\cyclic{X,Y,Z} R^{\nabla}(X,Y,Z,V)=-\sigma_T(X,Y,Z,V)=-\!\!\cyclic{X,Y,Z}g(T(X,Y),T(Z,V))\\
&=\cyclic{X,Y,Z}T(X,Y,T(V,Z))=\cyclic{X,Y,Z}T^\mathcal{H}(X,Y,T(V,Z)),
\end{align*}
where the last step used again that $T(V,Z)\in \mathcal{H}$. Thus, \eqref{Torsionintorsion} holds for any $X,Y,Z\in TM$.

As the subspaces $\mathcal{V}$, $\mathcal{H}$ are preserved by $\nabla$ so are the components of tensors on $TM$. In particular, $\nabla T=0$ implies $\nabla T^{\mathcal{H}}=0$. Use \eqref{Torsionintorsion} and $\nabla_ZV\in \mathcal{V}$ to obtain
\begin{align*}
(L_VT^\mathcal{H})(X,Y,Z)&=V(T^\mathcal{H}(X,Y,Z))-\!\!\cyclic{X,Y,Z}T^\mathcal{H}(X,Y,L_VZ)\\
&=(\nabla_VT^{\mathcal{H}})(X,Y,Z)+\!\!\cyclic{X,Y,Z}T^{\mathcal{H}}(X,Y,\nabla_VZ)\\
&\qquad\qquad-\!\!\cyclic{X,Y,Z}T^\mathcal{H}(X,Y,\nabla_VZ-\nabla_ZV-T(V,Z))\\
&=\cyclic{X,Y,Z}T^\mathcal{H}(X,Y,T(V,Z))=0.
\end{align*}
Equation \eqref{pinabla} follows directly from $\nabla^{g_N}_{X}{Y}=\pi_*(\nabla^{g}_{\overline{X}}\overline{Y})$ for Riemannian submersions. %, see for instance \cite[Prop. 13]{Petersen}.
Finally we conclude that $\check{T}$ is $\nabla^{\check{T}}$-parallel:
\begin{align*}
(\nabla^{\check{T}}_A\check{T})(X,Y,Z)&=A(\check{T}(X,Y,Z))-\!\!\cyclic{X,Y,Z}\check{T}(X,Y,\nabla_A^{\check{T}} Z)\\
&=A(\pi\circ T^\mathcal{H}(\overline{X},\overline{Y},\overline{Z}))-\!\!\cyclic{X,Y,Z}\check{T}(X,Y,\pi_*(\nabla_{\overline{A}}\overline{Z}))\\
&=\overline{A}(T^\mathcal{H}(\overline{X},\overline{Y},\overline{Z}))-\!\!\cyclic{X,Y,Z}T^\mathcal{H}(\overline{X},\overline{Y},\nabla^T_{\overline{A}}\overline{Z})\\
&=(\nabla_{\overline{A}}T^\mathcal{H})(\overline{X},\overline{Y},\overline{Z})=0.\qedhere
\end{align*}
\end{proof}

Observe that compared to earlier versions this yields the following well known generalization of de Rham splitting as an immediate consequence.

\begin{corollary}
Let $\nabla$ be a connection with parallel skew torsion $T$ and $TM=V_1\oplus V_2$ a holonomy-invariant decomposition. Then locally
\begin{align*}
(M,g,\nabla)=(M_1,g_1,\nabla^{T_1})\times (M_2,g_2,\nabla^{T_2})
\end{align*}
if and only if the torsion is decomposable, i.e. $T=T_1+T_2$ with $T_i\in\Lambda^3V_i$.
\end{corollary}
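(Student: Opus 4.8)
I would dispatch the forward implication in one line: on a product the torsion of $\nabla^{T_1}\times\nabla^{T_2}$ is $\mathrm{pr}_1^*T_1+\mathrm{pr}_2^*T_2$ with $\mathrm{pr}_i^*T_i\in\Lambda^3V_i$, so $T$ is decomposable.

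For the converse I would start from $\Lambda^3TM=\Lambda^3V_1\oplus(\Lambda^2V_1\wedge V_2)\oplus(V_1\wedge\Lambda^2V_2)\oplus\Lambda^3V_2$ and observe that $T=T_1+T_2$ with $T_i\in\Lambda^3V_i$ forces all mixed components of $T$ to vanish. In particular hypothesis \eqref{projecttau} of Theorem~\ref{canonicalsub} is satisfied for $(\mathcal H,\mathcal V)=(V_1,V_2)$ and, by symmetry, for $(\mathcal H,\mathcal V)=(V_2,V_1)$. Applying the theorem twice produces, on a neighbourhood $U$, local Riemannian submersions $\pi_1\colon U\to N_1$ and $\pi_2\colon U\to N_2$ with totally geodesic fibres, those of $\pi_1$ tangent to $V_2$ and those of $\pi_2$ to $V_1$, together with $3$-forms $\check T_i$ on $N_i$ satisfying $\pi_i^*\check T_i=T_i$ and connections $\nabla^{\check T_i}=\nabla^{g_{N_i}}+\tfrac12\check T_i$ with parallel skew torsion on $N_i$.

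It remains to glue the two submersions into a product. Since every mixed component of $T$ vanishes, $\nabla^g_XY=\nabla^T_XY-\tfrac12T(X,Y)$ lies in $V_i$ for every $X\in TM$ and $Y\in V_i$, so $V_1$ and $V_2$ are $\nabla^g$-parallel, and the local de Rham theorem identifies $(U,g)$ with a Riemannian product $(M_1,g_1)\times(M_2,g_2)$, $TM_i=V_i$, with $\nabla^g=\nabla^{g_1}\times\nabla^{g_2}$ and $\pi_i$ the projection onto the $i$-th factor (so $N_i\cong M_i$). Combining this with $T=\check T_1+\check T_2$ gives $\nabla^T=\nabla^g+\tfrac12T=(\nabla^{g_1}+\tfrac12\check T_1)\times(\nabla^{g_2}+\tfrac12\check T_2)=\nabla^{\check T_1}\times\nabla^{\check T_2}$.

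The calculations are routine; the one point needing attention is this gluing, namely that the data extracted by Theorem~\ref{canonicalsub} — projectability of the horizontal torsion pieces and total geodesy of the fibres — meshes with the de Rham splitting. The crucial observation is that the vanishing of the mixed part of $T$, rather than mere $\nabla^T$-invariance of $V_1$ and $V_2$, is what upgrades the $V_i$ to parallel subbundles for the Levi-Civita connection, enabling the classical de Rham argument.
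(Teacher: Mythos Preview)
Your proof is correct and follows essentially the same route as the paper: apply \autoref{canonicalsub} with the roles of $V_1,V_2$ swapped to see both distributions are integrable with totally geodesic leaves, then assemble the product. Your write-up is in fact more complete than the paper's, which simply asserts that the two Riemannian projections yield the product; your explicit observation that the vanishing of mixed torsion makes each $V_i$ parallel for $\nabla^g$, so that classical de~Rham applies, is exactly the justification that the paper leaves implicit.
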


\begin{proof}
Here \eqref{projecttau} is satisfied for $\mathcal{V}=V_1$, $\mathcal{H}=V_2$ and vice versa. Hence, both distributions are integrable and we obtain locally defined Riemannian projection maps to their respective integral submanifolds. The converse is clear.
\end{proof}

\begin{rem}
A splitting $TM=\mathcal{V}\oplus\mathcal{H}$ satisfying \eqref{projecttau} is not necessarily unique. In \cite{CleyMorSemm} it is shown that, if there exists any $\mathrm{Hol}_0(\nabla)$-invariant splitting, there exists one with maximal $\mathcal{V}$ satisfying \eqref{projecttau}. This was recently coined as the canonical splitting in \cite{MorSch} compared to other admissible splittings satisfying \eqref{projecttau}. Our main application in \autoref{parallelovernk} reflects that distinction. It is crucially an admissible, but never canonical, splitting.
\end{rem}

We now show that a further splitting of the holonomy representation can manifest a splitting on the base.

\begin{prop}\label{holbelow}
Suppose the holonomy representation of $\nabla$ splits into orthogonal submodules $TM=\mathcal{V}\oplus\mathcal{H}_1\oplus\mathcal{H}_2$ such that $\mathcal{V}$ and $\mathcal{H}=\mathcal{H}_1\oplus\mathcal{H}_2$ satisfy condition \eqref{projecttau} of \autoref{canonicalsub}. Let $\pi\colon M\to N$ denote the canonical submersion and suppose further that $\mathcal{H}_1$ and $\mathcal{H}_2$ are projectable. Then the representation $TN$ of the holonomy $\mathrm{Hol}_0(\nabla^{\check{T}})$ is reducible into modules $\pi_*\mathcal{H}_1\oplus\pi_*\mathcal{H}_2$. 
\end{prop}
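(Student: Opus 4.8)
The plan is to reduce the statement to the following: the sub-bundles $\pi_*\mathcal{H}_1$ and $\pi_*\mathcal{H}_2$ of $TN$ are parallel with respect to $\nabla^{\check T}$. Since for a metric connection the parallel sub-bundles are exactly those invariant under the restricted holonomy group, showing $\nabla^{\check T}$-parallelism is equivalent to showing $\mathrm{Hol}_0(\nabla^{\check T})$-invariance. Together with the facts that $\pi$ is a Riemannian submersion and that $\mathcal{H}=\mathcal{H}_1\oplus\mathcal{H}_2$ orthogonally, this gives an orthogonal decomposition $TN=\pi_*\mathcal{H}_1\oplus\pi_*\mathcal{H}_2$ into $\mathrm{Hol}_0(\nabla^{\check T})$-submodules, which is the claim.

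To carry this out, I would first record two observations. On the one hand, since $\mathcal{H}_1$ is invariant under $\mathrm{Hol}_0(\nabla^T)$ it is a $\nabla^T$-parallel sub-bundle of $TM$, so $\nabla^T_Z W\in\Gamma(\mathcal{H}_1)$ for every vector field $Z$ on $M$ and every $W\in\Gamma(\mathcal{H}_1)$; the same holds for $\mathcal{H}_2$. On the other hand, by hypothesis $\mathcal{H}_1$ is projectable, which precisely means that the horizontal lift $\overline Y$ of any local section $Y\in\Gamma(\pi_*\mathcal{H}_1)$ is again a section of $\mathcal{H}_1$. Now combine these with the submersion formula \eqref{pinabla} of \autoref{canonicalsub}: for $X\in\Gamma(TN)$ and $Y\in\Gamma(\pi_*\mathcal{H}_1)$ one has
\[
\nabla^{\check T}_X Y=\pi_*\bigl(\nabla^T_{\overline X}\overline Y\bigr)\in\pi_*\Gamma(\mathcal{H}_1)=\Gamma(\pi_*\mathcal{H}_1),
\]
because $\overline Y\in\Gamma(\mathcal{H}_1)$ and $\mathcal{H}_1$ is $\nabla^T$-parallel. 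Hence $\pi_*\mathcal{H}_1$ is $\nabla^{\check T}$-parallel, and the identical argument applied to $\mathcal{H}_2$ shows $\pi_*\mathcal{H}_2$ is $\nabla^{\check T}$-parallel. Since $\pi$ is a Riemannian submersion and $\mathcal{H}_1\perp\mathcal{H}_2$ with $\mathcal{H}_1\oplus\mathcal{H}_2=\mathcal{H}$, the bundles $\pi_*\mathcal{H}_1,\pi_*\mathcal{H}_2$ are orthogonal and span $TN$, so $TN=\pi_*\mathcal{H}_1\oplus\pi_*\mathcal{H}_2$ is the desired reduction of the holonomy representation of $\nabla^{\check T}$.

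There is no serious obstacle here; the point that needs the most care is the interaction between projectability and horizontal lifts, namely that projectability of $\mathcal{H}_i$ is exactly what guarantees that horizontal lifts of sections of $\pi_*\mathcal{H}_i$ stay inside $\mathcal{H}_i$, so that $\nabla^T$-parallelism of $\mathcal{H}_i$ can be transported down via \eqref{pinabla}. It is also worth noting explicitly that, unlike the construction of a further submersion of $N$, no refined torsion condition on $\mathcal{H}_1,\mathcal{H}_2$ (as in \eqref{projecttau}) is required: the argument only uses that $\mathcal{H}_i$ are $\nabla^T$-parallel and projectable, together with the already-established relation \eqref{pinabla} between $\nabla$ and $\nabla^{\check T}$.
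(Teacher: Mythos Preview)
Your argument is correct and rests on the same ingredient as the paper's proof, namely the relation \eqref{pinabla} together with the $\nabla^T$-invariance of $\mathcal{H}_i$. The paper, however, packages this differently: it invokes Ambrose--Singer, lifts a curve $\gamma$ in $N$ horizontally to $\overline{\gamma}$ in $M$, and shows that the horizontal lift of a $\nabla^{\check T}$-parallel vector field along $\gamma$ is $\nabla^T$-parallel along $\overline{\gamma}$; from this it reads off that parallel transport (and, separately, the curvature endomorphisms) of $\nabla^{\check T}$ preserve $\pi_*\mathcal{H}_i$. Your route is more economical: you bypass the curve-lifting and the curvature computation by using directly that a sub-bundle is $\mathrm{Hol}_0$-invariant iff it is parallel, and then checking parallelism in one line from \eqref{pinabla}. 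The paper's version has the minor by-product of exhibiting the explicit compatibility $\mathcal{P}^{\nabla^T}_{\overline\gamma}\overline{X}=\overline{\mathcal{P}^{\nabla^{\check T}}_\gamma X}$ between the two parallel transports, which you do not need but which can be useful elsewhere.
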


\begin{proof}
By the Ambrose-Singer theorem the holonomy algebra at $x\in N$ is generated by elements of the form
\begin{align*}
(\mathcal{P}^{\nabla^{\check{T}}}_\gamma)^{-1}\circ\mathcal{R}^{\nabla^{\check{T}}}(X\wedge Y)\circ \mathcal{P}_\gamma^{\nabla^{\check{T}}}
\end{align*}
where $\mathcal{P}^{\nabla^{\check{T}}}_\gamma$ denotes parallel transport along a piecewise smooth curve $\gamma$ from $x$ to $p$ and $\mathcal{R}^{\nabla^{\check{T}}}(X\wedge Y)\in \Lambda^2T_pN\subset \mathrm{End}(T_pN)$ is the evaluation of the curvature operator at $X,Y\in T_pN$. For any such curve let $\overline{\gamma}$ be the horizontal lift starting at a point $x_0\in \pi^{-1}(x)$. Then let $X(t)$ be the unique parallel vector field along $\gamma$. By \eqref{pinabla}
\begin{align*}
\pi_*(\nabla_{\dot{\overline{\gamma}}}\overline{X(t)})=\nabla^{\check{T}}_{\dot{\gamma}}X(t)=0
\end{align*}
and hence $\nabla_{\dot{\overline{\gamma}}}\overline{X(t)}\in\mathcal{V}$. However, $\nabla_{\dot{\overline{\gamma}}} \overline{X(t)}\in \mathcal{H}$ since $\overline{X(t)}\in\Gamma\mathcal{H}$ and $\mathcal{H}$ is invariant under the holonomy of $\nabla$. Therefore $\nabla_{\dot{\overline{\gamma}}} \overline{X(t)}=0$ and
\begin{align*}
\mathcal{P}^{\nabla}_{\overline{\gamma}}\overline{X}=\overline{\mathcal{P}^{\nabla^{\check{T}}}_\gamma X}.
\end{align*}
In particular, parallel transport with respect to $\nabla^{\check{T}}$ preserves $\pi_*\mathcal{H}_1$ and $\pi_*\mathcal{H}_2$. Now let $X,Y\in T_pN$, then
\begin{align*}
R^{\nabla^{\check{T}}}(X\wedge Y)Z&=\nabla^{\check{T}}_X\nabla^{\check{T}}_YZ-\nabla^{\check{T}}_Y\nabla^{\check{T}}_XZ-\nabla^{\check{T}}_{[X,Y]}Z\\
&=\pi_*(\nabla_{\overline{X}}\nabla_{\overline{Y}}\overline{Z}-\nabla_{\overline{Y}}\nabla_{\overline{X}}\overline{Z}-\nabla_{\overline{[X,Y]}}\overline{Z})
\end{align*}
preserves the modules $\pi_*\mathcal{H}_1$ and $\pi_*\mathcal{H}_2$ as well.
\end{proof}

The following computational lemma has already been used in \cite{ADS20}. However, it holds for any canonical submersion.

\begin{lemma}\label{nablavert}
Let $\nabla$ be a connection as in \autoref{canonicalsub}. Then
\begin{align*}
g(\nabla_XY,Z)=T(X,Y,Z)
\end{align*}
for any vertical vector $X\in\mathcal{V}$, horizontal vector $Z\in \mathcal{H}$, and basic vector field $Y\in\Gamma\mathcal{H}$. In particular, the expression is tensorial.
\end{lemma}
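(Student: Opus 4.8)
The plan is to compute $g(\nabla_X Y, Z)$ directly from the defining formula $g(\nabla_X Y, Z) = g(\nabla^g_X Y, Z) + \tfrac12 T(X,Y,Z)$, and to show that the Levi-Civita term $g(\nabla^g_X Y, Z)$ equals $\tfrac12 T(X,Y,Z)$ under the stated hypotheses. Equivalently, using $\nabla^T = \nabla$, I want to show $g(\nabla^T_X Y, Z) = T(X,Y,Z)$, i.e. that the $\nabla^T$-derivative of the basic horizontal field $Y$ in the vertical direction $X$ has horizontal part exactly $T(X,Y)$. First I would record that since $\mathcal{V}$ is $\nabla^T$-invariant (holonomy invariance) and $Z \in \mathcal{H}$, we have $g(\nabla^T_X Z', Z)$-type identities available, but the cleaner route is: by the Koszul-type formula for $\nabla^T$ and the skew-symmetry of $T$, write $g(\nabla^T_X Y, Z) - g(\nabla^T_X Z, Y)$ and $g(\nabla^T_X Y, Z) + g(\nabla^g_X Z, Y)$ — I'll pick whichever makes the basic/vertical bracket terms transparent.

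The key mechanism is that $Y$ basic means $[X, Y]$ is vertical for every vertical $X$ (that is the definition of a basic, i.e. projectable horizontal, vector field relative to the foliation tangent to $\mathcal{V}$). Hence $g([X,Y], Z) = 0$ since $Z$ is horizontal. Next I would use that the fibers are totally geodesic (established in Theorem 2.1): this gives $g(\nabla^g_X X', \cdot)$ is vertical for vertical $X, X'$, and dually $g(\nabla^g_X Z, X') = -g(Z, \nabla^g_X X') = 0$ for $X' \in \mathcal{V}$, so $\nabla^g_X Z$ is horizontal when $X$ is vertical and $Z$ is horizontal. Combining: from $g(\nabla^g_X Y, Z) - g(\nabla^g_Y X, Z) = g([X,Y], Z) = 0$ and the torsion formula, I get $g(\nabla^T_X Y, Z) = g(\nabla^T_Y X, Z) + T(X,Y,Z)$. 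Now $g(\nabla^T_Y X, Z)$: here $Y$ is horizontal and $X$ is vertical, so $\nabla^T_Y X$ — using invariance of $\mathcal{V}$ under $\nabla^T$ — lies in $\mathcal{V}$, hence is orthogonal to $Z \in \mathcal{H}$, so $g(\nabla^T_Y X, Z) = 0$. This yields $g(\nabla^T_X Y, Z) = T(X,Y,Z)$ as claimed, and tensoriality is immediate since the right-hand side is $T$ evaluated on three vectors.

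The main obstacle — really the only subtle point — is being careful about which derivative ($\nabla^g$ versus $\nabla^T$) preserves $\mathcal{V}$ and which geometric fact (total geodesy of the fibers, holonomy invariance of $\mathcal{H}$ and $\mathcal{V}$, or condition \eqref{projecttau} forcing $T(\mathcal{V},\mathcal{V}) \subset \mathcal{V}$ and $T(\mathcal{V},\mathcal{H}) \subset \mathcal{H}$) is being invoked at each step, so that no circularity creeps in with the basic-field hypothesis. In particular I must use $\nabla^T$-invariance of $\mathcal{V}$ to kill $g(\nabla^T_Y X, Z)$ and the Levi-Civita total-geodesy statement (not the torsion one) to handle the bracket, and check that "basic" is being used only through $[X,Y] \in \mathcal{V}$. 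Once those are lined up, the computation is a two-line rearrangement of the Koszul formula.
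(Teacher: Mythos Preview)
Your proposal is correct and follows essentially the same route as the paper: rewrite $g(\nabla_X Y,Z)$ via the torsion identity $\nabla_X Y=\nabla_Y X+[X,Y]+T(X,Y)$, kill $g([X,Y],Z)$ using that $Y$ basic forces $[X,Y]\in\mathcal{V}$, and kill $g(\nabla_Y X,Z)$ using that $\mathcal{V}$ is $\nabla$-invariant so $\nabla_Y X\in\mathcal{V}$. The detour through $\nabla^g$ and the totally geodesic fibres is unnecessary (the torsion identity already gives the needed rearrangement directly), and your closing remark that total geodesy ``handles the bracket'' is a slip --- it is the basic hypothesis alone that does that --- but none of this affects the validity of the argument.
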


\begin{proof}
Recall that a vector field $Y$ is basic if it is horizontal and $\pi$-related to a vector field on $N$. In particular, we have $\pi_*[X,Y]=[\pi_*X,\pi_*Y]=0$ for any vector field $X\in\Gamma \mathcal{V}$. Note also that $\nabla_YX\in \mathcal{V}$ as the decomposition is $\mathfrak{hol}(\nabla)$-invariant. Therefore,
\begin{align*}
g(\nabla_XY,Z)&=g(\nabla_YX,Z)+g([X,Y],Z)+T(X,Y,Z)=T(X,Y,Z).\qedhere
\end{align*}
\end{proof}

\section{Parallel \texorpdfstring{$3$-$(\alpha,\delta)$}{3-(a,d)}-Sasaki and Nearly Kähler Manifolds}
We like to introduce the structures involved in the application of canonical submersions we consider in this paper, in particular $3$-$(\alpha,\delta)$-Sasakian and nearly Kähler manifolds. An almost contact metric manifold $(M^{2n+1},g,\xi,\eta,\varphi)$ is given by an odd dimensional Riemannian manifold, a unit length vector field $\xi\in \Gamma TM$ called Reeb vector field, its metric dual $1$-form $\eta$ and an almost Hermitian structure $\varphi\in \Gamma \mathrm{End}(TM)$ on $\ker \eta$ satisfying
\begin{gather*}
\varphi\xi=0,\qquad \eta\circ\varphi=0,\qquad \varphi^2=-\mathrm{id}+\xi\otimes\eta,\\
g(\varphi X,\varphi Y)=g(X,Y)-\eta(X)\eta(Y).
\end{gather*}
These are considered the odd dimensional analogue of almost Hermitian manifolds. As for the latter we define the \emph{fundamental} $2$-form $\Phi(X,Y)=g(X,\varphi Y)$ for a given almost contact metric manifold.

A tuple $(M,g,\xi_i,\varphi_i,\eta_i)_{i=1,2,3}$ of three almost contact metric structures on the same underlying Riemannian manifold is called almost $3$-contact metric manifold if they additionally satisfy the compatibility conditions
\begin{align*}
\varphi_i\xi_j=\xi_k,\qquad \eta_i\circ\varphi_j=\eta_k,\qquad \varphi_i\varphi_j=\varphi_k+\xi_i\otimes\eta_j.
\end{align*}
These properties guarantee that the endomorphisms act as imaginary quaternions on the \emph{horizontal} distribution $\mathcal{H}\coloneqq \bigcap \ker\eta_i$. Accordingly the complementary distribution $\mathcal{V}\coloneqq \mathcal{H}^\perp=\mathrm{span}\{\xi_1,\xi_2,\xi_3\}$ is called \emph{vertical}.

\begin{definition}[\cite{AgrDil}]\label{3ad}
An almost $3$-contact metric manifold is called \emph{$3$-$(\alpha,\delta)$-Sasaki manifold} for real constants $\delta$ and $\alpha\neq 0$ if
\begin{align}\label{def3ad}
d\eta_i=2\alpha\Phi_i+2(\alpha-\delta)\eta_j\wedge \eta_k
\end{align}
for every even permutation $(ijk)$ of $(123)$.
\end{definition}

Most prominently, if $\alpha=\delta=1$ the manifold is $3$-Sasakian. A second class often considered is the second Einstein metric with parameters $\delta=(2n+3)\alpha$ where $\dim M=4n+3$. Our work highlights the subclass of \emph{parallel $3$-$(\alpha,\delta)$-Sasaki manifolds}, that is if $\delta=2\alpha$. Their most prominent feature is highlighted with regards to the canonical connection as defined via the following theorem.

\begin{theorem}[\cite{AgrDil}]\label{canconn}
A $3$-$(\alpha,\delta)$-Sasaki manifold admits a unique metric connection $\nabla$ with skew torsion such that
\begin{align}\label{eq:canconn}
\nabla_X\varphi_i&=\beta(\eta_k(X)\varphi_j-\eta_j(X)\varphi_k),
\end{align}
for every even permutation $(ijk)$ of $(123)$ and with $\beta=2(\delta-2\alpha)$.

The connection $\nabla$ preserves the splitting $TM=\mathcal{V}\oplus\mathcal{H}$ and its torsion is given by
\begin{align}\label{cantorsion}
T=2\alpha\sum_{i=1}^3\eta_i\wedge\Phi_i-2(\alpha-\delta)\eta_{123}=2\alpha\sum_{i=1}^3\eta_i\wedge\Phi_i^\mathcal{H}+2(\delta-4\alpha)\eta_{123},
\end{align}
where $\Phi_i^\mathcal{H}\coloneqq \Phi_i|_\mathcal{H}$. In particular, the torsion is parallel $\nabla T=0$.
\end{theorem}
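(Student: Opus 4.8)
The plan is to pin down the connection through its torsion $3$-form. A metric connection with skew torsion is $\nabla=\nabla^g+\tfrac12 T$ for a unique $T\in\Lambda^3M$, and it acts on endomorphism fields by $\nabla_X\psi=\nabla^g_X\psi+\tfrac12[T_X,\psi]$, where $T_X\coloneqq T(X,\cdot,\cdot)\in\mathfrak{so}(TM)$. Hence \eqref{eq:canconn} is equivalent to the pointwise linear condition
\begin{align*}
\tfrac12[T_X,\varphi_i]=\beta\bigl(\eta_k(X)\varphi_j-\eta_j(X)\varphi_k\bigr)-\nabla^g_X\varphi_i
\end{align*}
on $T$, for every even permutation $(ijk)$ of $(123)$. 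The first and main task is to make the right-hand side explicit, that is, to compute $\nabla^g_X\xi_i$ and $\nabla^g_X\varphi_i$ out of the structure equation \eqref{def3ad}.

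For that I would first show, using \eqref{def3ad} together with the three compatibility relations $\varphi_i\xi_j=\xi_k$, $\eta_i\circ\varphi_j=\eta_k$ and $\varphi_i\varphi_j=\varphi_k+\xi_i\otimes\eta_j$, that each Reeb field $\xi_i$ is Killing. Feeding this back into \eqref{def3ad} one obtains a closed expression of the shape
\begin{align*}
\nabla^g_X\xi_i=-\alpha\,\varphi_iX+(\alpha-\delta)\bigl(\eta_k(X)\xi_j-\eta_j(X)\xi_k\bigr),
\end{align*}
and then, via $\Phi_i(X,Y)=g(X,\varphi_iY)$ and the almost contact identities, a corresponding formula for $\nabla^g_X\varphi_i$. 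This passage from the exterior-derivative identity to pointwise covariant-derivative data is where the real work sits; it requires the Killing property and a careful bookkeeping of the three intertwined almost contact structures. Everything after it is linear algebra in the quaternionic module $(\mathcal H,\varphi_1,\varphi_2,\varphi_3)$ together with the $\mathfrak{so}(3)$-action of the $\varphi_i$ on $\mathcal V$.

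Granting those formulas, existence is verified by inserting the candidate $T=2\alpha\sum_i\eta_i\wedge\Phi_i-2(\alpha-\delta)\eta_{123}$ into the displayed identity and evaluating separately for horizontal and for vertical $X$. For horizontal $X$ only $\sum_i\eta_i\wedge\Phi_i$ contributes and, using $\varphi_i\varphi_j=\varphi_k+\xi_i\otimes\eta_j$, one recovers exactly $-\nabla^g_X\varphi_i$. For $X=\xi_\ell$ the $\eta_{123}$-term together with the vertical parts of the $\eta_i\wedge\Phi_i$ produces the rotation on the right-hand side, and matching its coefficient against $\nabla^g_{\xi_\ell}\varphi_i$ forces $\beta=2(\delta-2\alpha)$, in particular a constant. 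For uniqueness, let $\nabla$ be an arbitrary metric connection with skew torsion obeying \eqref{eq:canconn} for some function $\beta$; differentiating $\varphi_i\xi_i=0$ and using $g(\nabla_X\xi_i,\xi_i)=0$ one gets, by a purely algebraic computation, $\nabla_X\xi_i=\beta(\eta_k(X)\xi_j-\eta_j(X)\xi_k)$. Thus $\nabla$ preserves $\mathcal V$ and hence $\mathcal H=\mathcal V^\perp$, which already establishes the invariance of the splitting claimed in the theorem. Reading off $g(\nabla_XY,Z)$ from the now known $\nabla\xi_i$ and $\nabla\varphi_i$ and comparing with $\nabla^g$, whose vertical part is dictated by $d\eta_i|_{\mathcal V}$ and hence by $\delta$, then shows that $T$ must equal \eqref{cantorsion}; here the purely horizontal part in $\Lambda^3\mathcal H$ is pinned down because a $3$-form on $\mathcal H$ all of whose contractions lie in $\mathfrak{sp}(n)$ must vanish.

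Finally, \eqref{eq:canconn} at once yields $\nabla_X\Phi_i=\beta(\eta_k(X)\Phi_j-\eta_j(X)\Phi_k)$ and, since $\nabla$ is metric and preserves $\mathcal V$, also $\nabla_X\eta_i=\beta(\eta_k(X)\eta_j-\eta_j(X)\eta_k)$. Substituting these into $\nabla_X$ of $2\alpha\sum_i\eta_i\wedge\Phi_i-2(\alpha-\delta)\eta_{123}$, all $\beta$-contributions cancel in the cyclic sum over $i$ and annihilate $\eta_{123}$ through repeated $\eta$-factors, so $\nabla T=0$. The only genuinely delicate point thus remains the initial computation of $\nabla^g\xi_i$ and $\nabla^g\varphi_i$ from \eqref{def3ad}; the rest is organized bookkeeping with the quaternionic relations.
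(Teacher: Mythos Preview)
The paper does not prove this theorem at all: it is stated with attribution to \cite{AgrDil} and used as input, so there is no ``paper's own proof'' to compare against. Your outline is a reasonable sketch of how the original argument in \cite{AgrDil} actually proceeds---reduce \eqref{eq:canconn} to a linear constraint on the torsion $3$-form, compute $\nabla^g\xi_i$ and $\nabla^g\varphi_i$ from the defining relation \eqref{def3ad} and the Killing property of the Reeb fields, then verify existence by plugging in the candidate $T$ and uniqueness by showing that any solution must preserve $\mathcal V$ and have the prescribed components. One caution on your uniqueness step: deriving $\nabla_X\xi_i$ purely from $\varphi_i\xi_i=0$ only gives $\varphi_i(\nabla_X\xi_i)$, which determines $\nabla_X\xi_i$ modulo $\langle\xi_i\rangle$; you need to combine this with $\varphi_j\xi_i=-\xi_k$ (or equivalently differentiate $\varphi_j\xi_k=\xi_i$) and the metric condition to close the argument. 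Likewise, the claim that the purely horizontal part of $T$ vanishes because ``a $3$-form on $\mathcal H$ all of whose contractions lie in $\mathfrak{sp}(n)$ must vanish'' is correct but deserves a line of justification (an $\mathfrak{sp}(n)$-valued $2$-form is of type $(1,1)$ for each $\varphi_i$, and a $3$-form with this property for three anticommuting complex structures is zero). With those two points tightened, your plan matches the original proof in substance.
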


Hence for parallel $3$-$(\alpha,\delta)$-Sasaki manifolds the canonical connection parallelizes all structure tensors. On homogeneous parallel $3$-$(\alpha,\delta)$-Sasaki manifolds it was shown that the metric is naturally reductive with respect to the standard presentation as a quotient of the automorphism group, see \cite{ADS20}. In that case the canonical and Ambrose-Singer connections coincide.

The canonical connection also gave rise for an initial instance of canonical submersions over quaternionic Kähler spaces in \cite{ADS20}. Recall that \emph{quaternionic Kähler} manifolds $(\check{N}^{4n},g_{\check{N}},\mathcal{Q})$, $n\geq 2$ are equipped with a $3$-dimensional subbundle $\mathcal{Q}\subset\mathrm{End}(T\check{N})$ locally generated by a triple of almost Hermitian structures that is invariant under $\mathrm{Hol}(\nabla^{g_{\check{N}}})$. Equivalently, they are the class of Riemannian manifolds with exceptional holonomy given by a subgroup of $\mathrm{Sp}(1)\mathrm{Sp}(n)$. For $\dim \check{N}=4$ we require $\check{N}$ to be Einstein and anti-self-dual.

\begin{theorem}[\cite{ADS20}]
Let $(M,g,\xi_i,\varphi_i,\eta_i)_{i=1,2,3}$ be a $3$-$(\alpha,\delta)$-Sasaki manifold. Then there exists a locally defined Riemannian submersion $\pi\colon M\to \check{N}$ with totally geodesic fibers tangent to $\mathcal{V}$ such that $\nabla^{g_{\check{N}}}_XY=\pi_*(\nabla_{\overline{X}}\overline{Y})$. The base space $\check{N}$ admits a quaternionic Kähler structure locally generated by the $3$ almost Hermitian structures
\begin{align*}
I_i=\pi_*\circ \varphi_i\circ s_*,
\end{align*}
for any locally defined section $s\colon \check{N}\to M$.
\end{theorem}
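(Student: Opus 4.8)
The plan is to deduce both assertions from the Canonical Submersion Theorem, \autoref{canonicalsub}, applied to the canonical connection $\nabla$ of \autoref{canconn} and the holonomy-invariant splitting $TM=\mathcal H\oplus\mathcal V$. First I would verify the hypotheses of \autoref{canonicalsub}. By \autoref{canconn}, $\nabla$ preserves $TM=\mathcal V\oplus\mathcal H$, and its torsion is $T=2\alpha\sum_i\eta_i\wedge\Phi_i-2(\alpha-\delta)\eta_{123}$ by \eqref{cantorsion}. Since each $\varphi_i$ preserves both $\mathcal H$ and $\mathcal V$, the fundamental $2$-form $\Phi_i$ has no $\mathcal V\wedge\mathcal H$ component, so $\eta_i\wedge\Phi_i\in\mathcal V\wedge\Lambda^2\mathcal H\oplus\Lambda^3\mathcal V$ and $\eta_{123}\in\Lambda^3\mathcal V$; in particular the $\Lambda^2\mathcal V\wedge\mathcal H$ component of $T$ vanishes (so \eqref{projecttau} holds) and moreover the purely horizontal part $T^{\mathcal H}$ vanishes. \autoref{canonicalsub} then yields the locally defined Riemannian submersion $\pi\colon M\to\check N$ with totally geodesic fibers tangent to $\mathcal V$, and since $\check T=T^{\mathcal H}=0$ the pushed-down connection is the Levi-Civita connection of $\check N$, i.e.\ $\nabla^{g_{\check N}}_XY=\pi_*(\nabla_{\overline X}\overline Y)$. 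This is the first assertion.

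Next I would set up the almost quaternionic structure on $\check N$. For a local section $s$ with $\pi\circ s=\mathrm{id}$, the horizontal part of $s_*Y$ is the horizontal lift $\overline Y$ (because $\pi_*s_*Y=Y$), so $I_iY=\pi_*(\varphi_i\overline Y)$. Since $\varphi_i$ restricts on $\mathcal H$ to a $g$-orthogonal endomorphism with $\varphi_i^2|_{\mathcal H}=-\mathrm{id}$ and $\varphi_i\varphi_j|_{\mathcal H}=\varphi_k|_{\mathcal H}$ (the $\xi\otimes\eta$ corrections drop on $\mathcal H$), and since $\pi_*|_{\mathcal H}$ is a linear isometry onto $T\check N$, the $I_i$ are $g_{\check N}$-orthogonal almost complex structures obeying the quaternion relations. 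Hence $\mathcal Q\coloneqq\langle I_1,I_2,I_3\rangle\subset\mathrm{End}(T\check N)$ is an almost quaternionic structure, and it remains to prove that $\mathcal Q$ is $\nabla^{g_{\check N}}$-parallel.

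For this I would argue along the lines of the proof of \autoref{holbelow}. The rank-$3$ subbundle $\mathcal Q^{\mathcal H}\coloneqq\langle\varphi_1|_{\mathcal H},\varphi_2|_{\mathcal H},\varphi_3|_{\mathcal H}\rangle\subset\mathrm{End}(\mathcal H)$ is $\nabla$-parallel: the right-hand side of \eqref{eq:canconn} always lies in $\mathcal Q^{\mathcal H}$, and it vanishes for horizontal arguments, so $\nabla_X\varphi_i=0$ when $X\in\mathcal H$. Moreover $\mathcal Q^{\mathcal H}$ is invariant under Lie transport along the vertical distribution: using the standard identity relating $\mathcal L_{\xi_l}$ and $\nabla_{\xi_l}$ on endomorphisms together with \eqref{eq:canconn} and the fact that $T(\xi_l,\cdot)|_{\mathcal H}$ is a constant multiple of $\varphi_l$ (a direct consequence of \eqref{cantorsion}), one finds $\mathcal L_{\xi_l}(\varphi_i|_{\mathcal H})$ is of the form $(\nabla_{\xi_l}\varphi_i+c[\varphi_l,\varphi_i])|_{\mathcal H}$ for a constant $c$, hence lies in $\mathcal Q^{\mathcal H}$. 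Since the flows of $\xi_1,\xi_2,\xi_3$ preserve $\mathcal H$ (because $[\xi_l,\mathcal H]\subseteq\mathcal H$ by \eqref{def3ad}) and carry the horizontal lift at one point of a fiber to the horizontal lift at another, this shows that the pushed-down subbundle $\mathcal Q=\pi_*\mathcal Q^{\mathcal H}$ does not depend on the choice of section $s$. Finally, exactly as in \autoref{holbelow}, $\nabla^{g_{\check N}}$-parallel transport of an element of $\mathcal Q$ along a curve $\gamma$ equals the $\pi_*$-image of its $\nabla$-parallel — hence $\mathcal Q^{\mathcal H}$-preserving — transport along the horizontal lift $\overline\gamma$; with the section-independence just established this proves $\mathcal Q$ is $\nabla^{g_{\check N}}$-parallel. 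Thus $\mathrm{Hol}_0(\nabla^{g_{\check N}})\subseteq\mathrm{Sp}(1)\mathrm{Sp}(n)$ and $\check N$ is quaternionic Kähler with $\mathcal Q$ locally generated by the $I_i$; for $\dim\check N=4$ one checks directly that $\check N$ is Einstein and anti-self-dual, as in \cite{ADS20}.

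The step I expect to be the main obstacle is the section-independence / descent of $\mathcal Q$: the $\varphi_i$ are not $\nabla$-parallel along the fibers, and by \autoref{nablavert} neither are basic horizontal vector fields, so one cannot transport the $\varphi_i$ to the base directly; the rotation of the triple $(\varphi_1,\varphi_2,\varphi_3)$ along the Reeb directions and the twisting of the horizontal distribution have to be shown to compensate, which is precisely the content of the Lie-derivative computation above.
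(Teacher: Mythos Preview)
The paper does not give its own proof of this theorem; it is quoted from \cite{ADS20} as background, so there is no in-paper argument to compare against directly. Your proposal is correct: applying \autoref{canonicalsub} to the canonical connection of \autoref{canconn} with vertical space $\mathcal V$ yields the submersion with $\check T=T^{\mathcal H}=0$, hence $\nabla^{\check T}=\nabla^{g_{\check N}}$, and the parallel-transport argument modelled on \autoref{holbelow} for the invariance of $\mathcal Q$ is sound. (In fact your Lie-derivative step can be shortened: \eqref{Lvarphi} already gives $\mathcal L_{\xi_l}\varphi_i\in\langle\varphi_1,\varphi_2,\varphi_3\rangle$, so the projectability of $\mathcal Q^{\mathcal H}$ is immediate.)

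For comparison, the paper's own proof of the analogous final theorem (nearly K\"ahler $\to$ quaternionic K\"ahler) follows a different, more computational route: rather than invoking parallel transport and holonomy, it evaluates $(\nabla^{g_{\check N}}_X I_i)Y$ explicitly by splitting $\overline X=s_*X+\hat X$ and using \autoref{nablavert} for the vertical piece, then reads off that the result lies in $\langle I_1,I_2,I_3\rangle$. Your approach identifies the holonomy reduction more cleanly, while the paper's computation has the advantage of producing explicit formulae for $\nabla^{g_{\check N}} I_i$ in terms of the section $s$ and the vertical defect $\hat X$.
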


We should remark that in the following we will fix $\mathcal{H}$ and $\mathcal{V}$ to be the horizontal and vertical distributions for a $3$-$(\alpha,\delta)$-Sasaki manifold while the horizontal/vertical subspaces in further applications of \autoref{canonicalsub} will be denoted appropriately to the situation.

For later use we recall the following Lie derivatives along vertical vectors.

\begin{prop}[\cite{AgrDil}]
For any $3$-$(\alpha,\delta)$-Sasaki manifold and any even permutation $(ijk)$ of $(123)$ we have the following identities
\begin{gather}%\label{dPhi}
%d\Phi_i=2(\eta_k\wedge\Phi_j-\eta_j\wedge\Phi_k)\\
\mathcal{L}_{\xi_i}\varphi_i=0,\qquad \mathcal{L}_{\xi_i}\varphi_j=-\mathcal{L}_{\xi_j}\varphi_i=2\delta\varphi_k\label{Lvarphi}\\
\mathcal{L}_{\xi_i}\xi_i=0,\, \qquad \mathcal{L}_{\xi_i}\xi_j=\,-\mathcal{L}_{\xi_j}\xi_i=2\delta\xi_k\label{liexi}
\end{gather}
\end{prop}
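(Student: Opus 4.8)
The plan is to compute the Lie derivatives directly from the definition $\mathcal{L}_{\xi_i}\varphi_j = [\xi_i, \varphi_j(\cdot)] - \varphi_j([\xi_i,\cdot])$ and $\mathcal{L}_{\xi_i}\xi_j = [\xi_i,\xi_j]$, translating brackets into the canonical connection $\nabla$ via $[X,Y] = \nabla_X Y - \nabla_Y X - T(X,Y)$. Since $\nabla$ parallelizes the $\varphi_i$ up to the explicit $\beta$-terms in \eqref{eq:canconn} (with $\beta = 2(\delta - 2\alpha)$) and preserves the splitting $TM = \mathcal{V}\oplus\mathcal{H}$, while the torsion is the explicit $3$-form \eqref{cantorsion}, every ingredient is algebraic once the brackets are rewritten. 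So the strategy is: first establish $\mathcal{L}_{\xi_i}\xi_j = 2\delta\xi_k$, then bootstrap to $\mathcal{L}_{\xi_i}\varphi_j$.

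For \eqref{liexi}: write $[\xi_i,\xi_j] = \nabla_{\xi_i}\xi_j - \nabla_{\xi_j}\xi_i - T(\xi_i,\xi_j)$. From \eqref{eq:canconn}, applying $\nabla_{\xi_i}$ to the identity $\varphi_i\xi_j = \xi_k$ (even permutation) and using $\nabla_{\xi_i}\varphi_i = 0$ gives $\nabla_{\xi_i}\xi_k = \varphi_i(\nabla_{\xi_i}\xi_j)$; similarly differentiating $\varphi_i\xi_k = -\xi_j$ gives a second relation, so that $\nabla_{\xi_i}$ acts on $\mathrm{span}\{\xi_j,\xi_k\}$ like a rotation generated by $\varphi_i$, i.e.\ $\nabla_{\xi_i}\xi_j = c\,\xi_k$, $\nabla_{\xi_i}\xi_k = -c\,\xi_j$ for some constant $c$, while $\nabla_{\xi_i}\xi_i = 0$ (differentiate $g(\xi_i,\xi_i)=1$ and $\varphi_i\xi_i = 0$). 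The constant $c$ is pinned down by the defining equation \eqref{def3ad}: $\mathrm{d}\eta_i(\xi_j,\xi_k) = 2\alpha\Phi_i(\xi_j,\xi_k) + 2(\alpha-\delta)(\eta_j\wedge\eta_k)(\xi_j,\xi_k) = 2(\alpha-\delta)$, and on the other hand $\mathrm{d}\eta_i(\xi_j,\xi_k) = \xi_j\eta_i(\xi_k) - \xi_k\eta_i(\xi_j) - \eta_i([\xi_j,\xi_k]) = -\eta_i([\xi_j,\xi_k])$, which combined with $T(\xi_j,\xi_k,\xi_i) = -2(\alpha-\delta)$ from \eqref{cantorsion} yields $c = \delta$ after collecting terms, hence $[\xi_i,\xi_j] = c\xi_k - (-c)\xi_k \cdot(\text{sign}) - T(\xi_i,\xi_j)$. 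Being careful with the antisymmetrization $\nabla_{\xi_i}\xi_j - \nabla_{\xi_j}\xi_i$ (the second term contributes $-(-c)\xi_k = c\xi_k$ since $(jik)$ is an odd permutation, giving $\nabla_{\xi_j}\xi_i = -c\xi_k$) gives $\mathcal{L}_{\xi_i}\xi_j = c\xi_k + c\xi_k - T(\xi_i,\xi_j) = 2c\xi_k + 2(\alpha-\delta)\xi_k$; matching $c$ so the total is $2\delta\xi_k$ forces $c = 2\delta - (\alpha-\delta)$, so one must recompute $c$ consistently — this bookkeeping is the place to be careful. The antisymmetry $\mathcal{L}_{\xi_i}\xi_j = -\mathcal{L}_{\xi_j}\xi_i$ is automatic from $[\xi_i,\xi_j] = -[\xi_j,\xi_i]$, and $\mathcal{L}_{\xi_i}\xi_i = 0$ is immediate.

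For \eqref{Lvarphi}: expand $(\mathcal{L}_{\xi_i}\varphi_j)(X) = \mathcal{L}_{\xi_i}(\varphi_j X) - \varphi_j(\mathcal{L}_{\xi_i}X) = \nabla_{\xi_i}(\varphi_j X) - \nabla_{\varphi_j X}\xi_i - T(\xi_i,\varphi_j X) - \varphi_j\big(\nabla_{\xi_i}X - \nabla_X\xi_i - T(\xi_i,X)\big)$. The terms $\nabla_{\xi_i}(\varphi_j X) - \varphi_j(\nabla_{\xi_i}X) = (\nabla_{\xi_i}\varphi_j)(X) = \beta(\eta_k(\xi_i)\varphi_i - \eta_i(\xi_i)\varphi_k)(X) \cdot(\text{with the right indices}) = -\beta\varphi_k X$ by \eqref{eq:canconn} evaluated at $\xi_i$. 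The remaining terms regroup as $-\nabla_{\varphi_j X}\xi_i + \varphi_j\nabla_X\xi_i - T(\xi_i,\varphi_j X) + \varphi_j T(\xi_i, X)$; here I would split $X$ into its vertical and horizontal parts and use \eqref{liexi} together with the explicit torsion \eqref{cantorsion} — on $\mathcal{H}$ the torsion term $2\alpha\sum\eta_i\wedge\Phi_i$ contributes $T(\xi_i, X) = 2\alpha\varphi_i X$, and $\nabla\xi_i$ on $\mathcal{H}$ is controlled via $\nabla_X\xi_i = \nabla_X(\varphi_j\xi_k) = (\nabla_X\varphi_j)\xi_k + \varphi_j\nabla_X\xi_k$ which unwinds using \eqref{eq:canconn}. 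Everything is linear-algebraic in the quaternionic relations $\varphi_i\varphi_j = \varphi_k + \xi_i\otimes\eta_j$, so the computation closes; substituting $\beta = 2(\delta-2\alpha)$ and simplifying should produce exactly $2\delta\varphi_k$. The identity $\mathcal{L}_{\xi_i}\varphi_j = -\mathcal{L}_{\xi_j}\varphi_i$ follows either by the symmetry of the final answer or by running the same computation with $i,j$ swapped (note $(jik)$ odd introduces the sign), and $\mathcal{L}_{\xi_i}\varphi_i = 0$ comes out of the same expansion with $j = i$, where the $\beta$-term vanishes because $\nabla_{\xi_i}\varphi_i = 0$ and the torsion/connection terms cancel by the compatibility $\varphi_i\xi_i = 0$.

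The main obstacle is purely the index and sign bookkeeping: keeping straight which triples $(ijk)$ are even versus odd permutations when antisymmetrizing $\nabla_X Y - \nabla_Y X$, and correctly evaluating the $\beta$-term of \eqref{eq:canconn} with its two summands $\eta_k(X)\varphi_j - \eta_j(X)\varphi_k$ when $X = \xi_i$. There is no conceptual difficulty — one only needs $\nabla T = 0$, $\nabla$-invariance of $\mathcal{V}\oplus\mathcal{H}$, the explicit formulas \eqref{eq:canconn}, \eqref{cantorsion}, \eqref{def3ad}, and the almost $3$-contact relations — but the constant $2\delta$ (as opposed to, say, $2\alpha$ or $\beta$) emerges only after all torsion and connection contributions are summed, so the arithmetic must be done carefully.
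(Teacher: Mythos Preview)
The paper does not supply its own proof of this proposition; it is quoted from \cite{AgrDil} and used as a black box. There is therefore no in-paper argument to compare your proposal against.

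On its own merits, your approach is workable in principle but contains a genuine gap. For \eqref{liexi} you posit $\nabla_{\xi_i}\xi_j=c\,\xi_k$ and then attempt to determine $c$ by ``matching $c$ so the total is $2\delta\xi_k$'', which is circular: you are assuming the conclusion. The step where you invoke $d\eta_i(\xi_j,\xi_k)$ already computes $\eta_i([\xi_j,\xi_k])$ \emph{directly}, independently of $\nabla$ and $T$; feeding this back into the decomposition $[\xi_i,\xi_j]=\nabla_{\xi_i}\xi_j-\nabla_{\xi_j}\xi_i-T(\xi_i,\xi_j)$ does not isolate $c$ but only reproduces the identity you want. The constant is in fact $c=\beta=2(\delta-2\alpha)$, and the honest way to obtain it is via the companion formula $\nabla_X\xi_i=\beta(\eta_k(X)\xi_j-\eta_j(X)\xi_k)$, which follows from \eqref{eq:canconn} by differentiating $\xi_i=\varphi_j\xi_k$; you gesture at this (``$\nabla_{\xi_i}$ acts on $\mathrm{span}\{\xi_j,\xi_k\}$ like a rotation'') but never close the loop.

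More to the point, invoking $\nabla$ and $T$ here is heavier machinery than needed and arguably logically inverted. Once one knows the $\xi_l$ are Killing (so $[\xi_i,\xi_j]\in\mathcal{V}$), the bracket is determined by $\eta_l([\xi_i,\xi_j])=-d\eta_l(\xi_i,\xi_j)$, which is read off from \eqref{def3ad} in one line. In \cite{AgrDil} these Lie-derivative identities are established from the defining data \eqref{def3ad} and the Levi-Civita covariant derivative $\nabla^g\varphi_i$, \emph{prior} to the construction of the canonical connection in \autoref{canconn}; your plan for \eqref{Lvarphi} inherits the same inversion by leaning on \eqref{eq:canconn} and \eqref{cantorsion}.
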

With that let us shift attention to nearly Kähler manifolds.
\begin{definition}
A nearly Kähler manifold $(N^{2m},g_N,J)$ is an almost Hermitian manifold such that $(\nabla^{g_N}_XJ)X=0$ for all $X\in TN$.
\end{definition}
%
%We will only deal with \emph{strict} nearly Kähler structures, i.e. those that satisfy $\nabla^{g_N}_XJ\neq 0$ for all $0\neq X\in TN$.

As initially observed in \cite{GraynK} a nearly Kähler manifold admits a particularly nice connection $\nabla^c$. We will call $\nabla^c$ the \emph{characteristic} connection of a nearly Kähler manifold as it is the unique connection with skew torsion preserving the $U(m)$-structure, compare \cite{Ag06}. In the literature it is often also called Bismut or canonical connection. In fact their usual definitions agree on nearly Kähler manifolds as it is the unique Hermitian connection and has torsion 
\begin{align*}
T^c(X,Y,Z)=g_N((\nabla^{g_N}_XJ)JY,Z).
\end{align*}
Furthermore, $T^c$ is parallel with respect to $\nabla^c$.

\section{Nearly Kähler orbifolds from parallel \texorpdfstring{$3$-$(\alpha,\delta)$}{3-(a,d)}-Sasaki manifolds}

Let $(M,g,\xi_i,\varphi_i,\eta_i)_{i=1,2,3}$ be a $3$-$(\alpha,\delta)$-Sasaki manifold and $\nabla$ its canonical connection with parallel skew torsion $T$. Choose any almost contact structure $\varphi$ in the associated sphere $\Sigma=\{a_1\varphi_1+a_2\varphi_2+a_3\varphi_3\ \vert\  |a|=1\}$ and denote its corresponding Reeb vector field $\xi$ and contact form $\eta$. If $\beta=0$, or equivalently $\delta=2\alpha$, we have $\nabla\varphi=0$ and also $\nabla \xi=0$ and $\nabla \eta=0$. Hence, the decomposition $TM=\langle\xi\rangle\oplus\ker\eta=\langle\xi\rangle\oplus\langle\xi\rangle^\perp$ is holonomy-invariant. Note that \eqref{projecttau} is trivially satisfied for any one dimensional vertical distribution. In the following we apply \autoref{canonicalsub} to this setup. We fix the notation $\mathcal{V}$ and $\mathcal{H}$ for the vertical and horizontal spaces of the initial parallel $3$-$(\alpha,\delta)$-manifold and denote the vertical and horizontal spaces with respect to this newly obtained submersion by $\langle\xi\rangle$ and $\langle\xi\rangle^\perp$ respectively.

\begin{theorem}\label{parallelovernk}
Let $(M,g,\xi_i,\varphi_i,\eta_i)_{i=1,2,3}$ be a parallel $3$-$(\alpha,\delta)$-Sasaki manifold and fix an almost contact metric structure $(\xi,\varphi,\eta)$ inside the associated sphere.
\begin{enumerate}[a)]
\item Then there exists a locally defined Riemannian submersion $\pi\colon (M,g)\to (N,g_N)$ along the orbits of $\xi$.
\item Set $\tilde{\varphi}\coloneqq \varphi|_{\mathcal{H}}-\varphi|_{\mathcal{V}}$ and $J=\pi_*\circ \tilde{\varphi}\circ s_*$ with an arbitrary local section $s\colon N\to M$ of $\pi$. Then $(N,g_N,J)$ is nearly Kähler.
\item The characteristic connection on $(N,g_N,J)$ agrees with the connection obtained from the canonical connection on $M$. In particular, $\check{T}(X,Y,Z)=g_N((\nabla^{g_N}_XJ)JY,Z)$.
\end{enumerate}
\end{theorem}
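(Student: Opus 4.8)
The plan is to apply the machinery already built up: Theorem~\ref{canonicalsub} immediately gives part (a) and the existence of a connection $\nabla^{\check T}$ on $N$ with parallel skew torsion $\check T$ satisfying \eqref{pinabla}, once we check that the one-dimensional vertical distribution $\langle\xi\rangle$ and its orthogonal complement $\langle\xi\rangle^\perp$ satisfy condition \eqref{projecttau}. As observed just before the statement, \eqref{projecttau} is automatic for a one-dimensional vertical distribution, so (a) is essentially free; the content is in (b) and (c).

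**Part (b): $J$ is an almost complex structure and is nearly Kähler.** First I would verify that $\tilde\varphi$ descends to a well-defined tensor $J$ on $N$ independent of the section $s$. For this the key point is that $\nabla\tilde\varphi$ has no $\langle\xi\rangle^\perp$-component when differentiating in a vertical direction, i.e.\ $\nabla_\xi\tilde\varphi$ is vertical-valued or zero; since $\delta=2\alpha$ gives $\beta=0$, equation \eqref{eq:canconn} yields $\nabla\varphi_i=0$ for all $i$, hence $\nabla\varphi=0$ for the fixed structure in the sphere, and likewise $\nabla\xi=\nabla\eta=0$. Therefore $\nabla\tilde\varphi=0$ as well (the sign flip on $\mathcal V$ is $\nabla$-parallel because $\mathcal V,\mathcal H$ are $\nabla$-invariant). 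Combined with \eqref{pinabla} this shows $\nabla^{\check T}J=0$: for horizontal lifts, $(\nabla^{\check T}_XJ)Y=\pi_*\big((\nabla_{\overline X}\tilde\varphi)\overline Y\big)=0$. Next, $J^2=-\Id$ on $N$: on $\mathcal H$, $\varphi^2=-\Id$ by the almost contact identity $\varphi^2=-\Id+\xi\otimes\eta$ restricted to $\ker\eta$; on $\mathcal V=\langle\xi\rangle$ we have $\tilde\varphi=-\varphi|_{\mathcal V}$, but $\mathcal V$ is not the fibre direction of $\pi$ — rather $\langle\xi\rangle$ is the fibre and the remaining two Reeb directions $\xi_j,\xi_k$ lie in $\langle\xi\rangle^\perp$, where $\tilde\varphi=-\varphi$ sends $\xi_j\mapsto -\xi_k$, $\xi_k\mapsto\xi_j$, so $\tilde\varphi^2=-\Id$ there too. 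Compatibility with $g_N$ follows since $\varphi$ is $g$-orthogonal and $\pi$ is a Riemannian submersion. Finally, a parallel almost complex structure with respect to \emph{some} metric connection with skew torsion is automatically nearly Kähler: from $g_N(\nabla^{g_N}_XJ\,Y,Z)=g_N(\nabla^{\check T}_XJ\,Y,Z)-\tfrac12\big(\check T(X,JY,Z)-\ldots\big)$ one computes $(\nabla^{g_N}_XJ)X$ using $\nabla^{\check T}J=0$ and the skew-symmetry of $\check T$, obtaining $0$; I expect this to reduce to the standard fact that $J$ Hermitian with parallel $J$ under a skew-torsion connection forces the $(3,0)+(0,3)$ structure, hence nearly Kähler. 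This algebraic verification of $(\nabla^{g_N}_XJ)X=0$ is the step requiring the most care.

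**Part (c): the characteristic connection coincides with $\nabla^{\check T}$.** Here I would invoke uniqueness: the characteristic connection $\nabla^c$ of a nearly Kähler manifold is the \emph{unique} metric connection with skew torsion preserving $J$ (equivalently, the unique Hermitian connection with totally skew torsion), as recalled in the excerpt. Since $\nabla^{\check T}$ is a metric connection with skew torsion $\check T$ and we have just shown $\nabla^{\check T}J=0$, uniqueness forces $\nabla^{\check T}=\nabla^c$ and $\check T=T^c$. The explicit formula then follows from the standard expression $T^c(X,Y,Z)=g_N((\nabla^{g_N}_XJ)Y,Z)$ together with the nearly Kähler identity $(\nabla^{g_N}_XJ)Y = -(\nabla^{g_N}_{JX}J)JY$-type manipulations: concretely, on a nearly Kähler manifold $g_N((\nabla^{g_N}_XJ)Y,Z)=g_N((\nabla^{g_N}_XJ)JY,JZ)$ and using $J$-antilinearity of $\nabla^{g_N}J$ one rewrites $T^c(X,Y,Z)=g_N((\nabla^{g_N}_XJ)JY,Z)$ as claimed. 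Alternatively — and this is probably cleaner and the route I would actually take — I would compute $\check T=T^{\mathcal H}$ directly from the explicit torsion \eqref{cantorsion}: restricting $T=2\alpha\sum_i\eta_i\wedge\Phi_i-2(\alpha-\delta)\eta_{123}$ to $\Lambda^3\mathcal H$ (equivalently $\Lambda^3\langle\xi\rangle^\perp$, discarding all terms containing $\eta=\eta_i$ for the distinguished $i$) leaves the two terms $2\alpha(\eta_j\wedge\Phi_j+\eta_k\wedge\Phi_k)$ restricted to $\langle\xi\rangle^\perp$, and one checks termwise that this equals $g_N((\nabla^{g_N}_\cdot J)J\cdot,\cdot)$ by expanding $\nabla^{g_N}J=\nabla^{\check T}J-\tfrac12(\check T\text{-terms})$ and using $\nabla^{\check T}J=0$. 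The main obstacle across (b) and (c) is bookkeeping the interaction between the two distributions $\mathcal V=\langle\xi_1,\xi_2,\xi_3\rangle$ and $\mathcal H$ of the original structure and the new splitting $\langle\xi\rangle\oplus\langle\xi\rangle^\perp$ — in particular keeping straight that $\xi_j,\xi_k$ are now \emph{horizontal} for $\pi$ and behave, under $\tilde\varphi$, exactly like a pair of $J$-conjugate vectors, which is precisely what makes $N$ genuinely nearly Kähler (and not Kähler) with $\dim N=\dim\mathcal H+2$.
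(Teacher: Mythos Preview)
Your plan for (a) and (c) is sound. In particular, the uniqueness argument for (c) is exactly the right move: once $\nabla^{\check T}$ is shown to be metric with skew torsion and to satisfy $\nabla^{\check T}J=0$, the uniqueness of the Hermitian skew-torsion connection on a nearly K\"ahler manifold forces $\nabla^{\check T}=\nabla^c$ and $\check T=T^c$. Your route to $\nabla^{\check T}J=0$ is also more direct than the paper's: once one knows that $\tilde\varphi$ is projectable and preserves $\langle\xi\rangle^\perp$, one has $\overline{JY}=\tilde\varphi\,\overline Y$ everywhere, and then $(\nabla^{\check T}_XJ)Y=\pi_*\bigl((\nabla_{\overline X}\tilde\varphi)\overline Y\bigr)=0$ in one line. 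The paper instead works through a fixed section $s$, splits $\overline X=s_*X+\hat X$, and must invoke \autoref{nablavert} to show that the $\hat X$-contributions cancel. One small correction on the way: independence of $J$ from the section $s$ is controlled by the \emph{Lie} derivative $\mathcal L_\xi\tilde\varphi$, not by $\nabla_\xi\tilde\varphi$; the paper obtains $\mathcal L_\xi\tilde\varphi=0$ from \eqref{Lvarphi} together with the $\mathcal L_\xi$-invariance of $\mathcal H$ and $\mathcal V$, and you should phrase that step the same way.

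There is, however, a genuine gap in (b). The assertion that ``a parallel almost complex structure with respect to some metric connection with skew torsion is automatically nearly K\"ahler'' is false, and no such ``standard fact'' exists: the existence of a Hermitian connection with skew torsion characterises the Gray--Hervella class $\mathcal W_1\oplus\mathcal W_3\oplus\mathcal W_4$, not $\mathcal W_1$ alone. Concretely, from $\nabla^{\check T}J=0$ one obtains
\[
(\nabla^{g_N}_XJ)X=-\tfrac12\,\check T(X,JX),
\]
so the nearly K\"ahler condition is equivalent to $\check T(X,JX)=0$ for all $X$, i.e.\ to $\check T$ being of type $(3,0)+(0,3)$ with respect to $J$. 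This does \emph{not} follow from skew-symmetry of $\check T$; it uses the explicit form $\pi^*\check T=2\alpha(\eta_2\wedge\Phi_2^{\mathcal H}+\eta_3\wedge\Phi_3^{\mathcal H})$ and, crucially, the sign flip in $\tilde\varphi=\varphi|_{\mathcal H}-\varphi|_{\mathcal V}$. To see that the sign is not cosmetic, note that $\varphi$ itself is also $\nabla$-parallel in the parallel case, so the projected endomorphism $J'=\pi_*\circ\varphi\circ s_*$ equally satisfies $\nabla^{\check T}J'=0$; yet $\check T(X,J'X)\neq 0$ for generic $X$ with components in both $\pi_*\mathcal H$ and $\pi_*\langle\xi_2,\xi_3\rangle$, so $(N,g_N,J')$ is not nearly K\"ahler. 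The paper verifies $\check T(X,JX)=0$ by computing $g\bigl((\check T_X\cdot J)Y,Z\bigr)=\check T(X,JY,Z)+\check T(X,Y,JZ)$ in the two nontrivial cases, equations \eqref{TJ1} and \eqref{TJ2}, and reading off the required skew-symmetry. You need to carry out this computation (or an equivalent one showing $\check T$ is of type $(3,0)+(0,3)$); in doing so you will see precisely where the minus sign on $\mathcal V$ in $\tilde\varphi$ is used.
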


\begin{proof}
We have already seen that the assumptions in \autoref{canonicalsub} are satisfied. We may in the following assume $\xi=\xi_1$. \autoref{canonicalsub} then implies that there is a locally defined Riemannian submersion $\pi\colon (M,g)\to (N,g_N)$ along the orbits of $\xi_1$ and horizontal space $\langle\xi_1\rangle^\perp=\langle\xi_2,\xi_3\rangle\oplus\mathcal{H}$. Further, there is a connection $\nabla^{\check{T}}=\nabla^{g_N}+\frac 12 \check{T}$ on $TN$ with parallel skew torsion given by
%We note that the canonical connection of a parallel $3$-$(\alpha,\delta)$-Sasaski manifold is the characteristic connection of any almost contact metric structure in its associated sphere. By appropriate choice we may assume $\xi=\xi_1$. Also it admits parallel torsion, so \autoref{acmsubm} implies that there exists a locally defined Riemannian submersion $\pi\colon (M,g)\to (N,g_N)$ with vertical subspace $\R \xi_1$ and horizontal subspace $\mathcal{H}\oplus\langle \xi_2,\xi_3\rangle$. Furthermore on $(N,g_N)$ there is a metric connection $\nabla^{\check{T}}=\nabla^{g_N}+\frac 12 \check{T}$ with parallel skew torsion $\check{T}$ given by
\begin{align}\label{checkT}
\pi^*\check{T}=T^{\langle\xi\rangle^\perp}=2\alpha(\eta_2\wedge\Phi^\mathcal{H}_2+\eta_3\wedge\Phi^\mathcal{H}_3)
\end{align}
where we have used \eqref{cantorsion}. This connection satisfies
\begin{align*}
\nabla^{\check{T}}_XY=\pi_*(\nabla_{\overline{X}} \overline{Y}).
\end{align*}
Now consider $J=\pi_*\circ \tilde{\varphi}\circ s_*$. Due to \eqref{Lvarphi} we have $\mathcal{L}_\xi\varphi=0$ and since $\mathcal{L}_\xi$ preserves $\mathcal{H}$ and $\mathcal{V}$, also $\mathcal{L}_\xi\tilde{\varphi}=0$. In particular, $J$ is independent of the choice of $s$. The compatibility with $g_N$ follows immediately as $\pi_*\colon \langle\xi\rangle^\perp\to TN$ and $\mathrm{pr}_{\langle\xi\rangle^\perp}\circ s_*\colon TN\to\langle\xi\rangle^\perp$ are isometric. We check that $J^2=-\operatorname{id}$. Since $s$ is a section of $\pi$ we have $s_*\circ \pi_*=\operatorname{id}$ on the image of $s$ and thus
\[
J^2=\pi_*\circ\tilde{\varphi}\circ s_*\circ \pi_*\circ \tilde{\varphi}\circ s_*=\pi_*\circ\tilde{\varphi}^2\circ s_*=\pi_*\circ(-\operatorname{id}+\eta\otimes\xi)\circ s_*=-\operatorname{id}
\]
where we have used that all involved endomorphisms preserve the orthogonal splitting $\mathcal{H}\oplus\langle\xi_2,\xi_3\rangle$.

%In the following we need to control the covariant derivatives of basic vector fields $Y$ in vertical direction. Then $[\xi_1,Y]\in \langle\xi_1\rangle$ and thus for any $Z\in \langle\xi_1\rangle^\perp$
%\begin{align}\label{nablaxi}
%g(\nabla_{\xi_1}Y,Z)&=g(\nabla_Y\xi_1+[\xi_1,Y],Z)+T(\xi_1,Y,Z)=2\alpha\left(\sum_{i=1}^3\eta_i\wedge\Phi_i^\mathcal{H}-2\eta_{123}\right)(\xi_1,Y,Z)\\
%\notag &=2\alpha(\Phi_1^\mathcal{H}(Y,Z)-2(\eta_{23}(Y,Z))).
%\end{align}
We check that $J$ is parallel with respect to $\nabla^{\check{T}}$. Remark that the horizontal lift of any vector field on $TN$ is basic and $\overline{\pi_*(\tilde{\varphi}_1(s_*Y))}=(\tilde{\varphi}_1(s_*Y))_{\mathcal{H}\oplus\langle\xi_2,\xi_3\rangle}$ wherever  the right side is defined, that is on the image $s(N)$. Set $\hat{X}\coloneqq \overline{X}-s_*X$ the vertical part of $-s_*X$. Then
\begin{align*}
(\nabla^{\check{T}}_X J)Y&=\nabla^{\check{T}}_X (JY) - J(\nabla^{\check{T}}_XY)=\pi_*(\nabla_{\overline{X}} \overline{JY}) - J(\pi_*(\nabla_{\overline{X}} \overline{Y}))\\
&=\pi_*(\nabla_{\overline{X}} (\overline{\pi_*(\tilde{\varphi}_1 (s_*Y))} ) - \tilde{\varphi}_1(s_*(\pi_* (\nabla_{\overline{X}} \overline{Y}))))\\
&= \pi_*(\nabla_{\hat{X}} (\overline{\pi_*(\tilde{\varphi}_1 (s_*Y))} ) + \nabla_{s_*X} (\overline{\pi_*(\tilde{\varphi}_1 (s_*Y))} ) - \tilde{\varphi}_1(\nabla_{\hat{X}} \overline{Y})- \tilde{\varphi}_1(\nabla_{s_*X} \overline{Y})).
%&=\pi_*(\nabla_{s_*X} (\tilde{\varphi}_1 (s_*Y) )- \tilde{\varphi}_1(\nabla_{s_*X} (s_*Y)))\\
%&\qquad -\eta_1(\hat{X})\pi_*(2\alpha(\varphi_1Y^\mathcal{H}+2(\eta_2(Y)\xi_3-\eta_3(Y)\xi_2)))
\end{align*}
Remark that the horizontal lift of any vector field on $TN$ is basic so we may employ \autoref{nablavert}. In our case
\begin{align*}
g(\nabla_{\xi_1}H,Z)&=T(\xi_1,H,Z)=2\alpha\left(\sum_{i=1}^3\eta_i\wedge\Phi_i^\mathcal{H}-2\eta_{123}\right)(\xi_1,H,Z)\\
&=2\alpha(\Phi_1^\mathcal{H}(H,Z)-2\eta_{23}(H,Z)),
\end{align*}
where $H$ is either $\overline{\pi_*(\tilde{\varphi}_1 (s_*Y))}$ or $\overline{Y}$. Note that we apply $\pi_*$ in the end so it suffices to assume $Z\in\langle\xi_1\rangle^\perp$ in the following:
\begin{align*}
g(\nabla_{\hat{X}} (\overline{\pi_*(\tilde{\varphi}_1 (s_*Y))},Z)&=2\alpha\eta_1(\hat{X})(\Phi_1^\mathcal{H}(\overline{\pi_*(\tilde{\varphi}_1 (s_*Y))},Z) - 2\eta_{23}(\overline{\pi_*(\tilde{\varphi}_1 (s_*Y))},Z))\\
&=2\alpha\eta_1(\hat{X})(\Phi_1^\mathcal{H}(\tilde{\varphi}_1 (s_*Y),Z) - 2\eta_{23}(\tilde{\varphi}_1 (s_*Y),Z))\\
&=2\alpha\eta_1(\hat{X})(\Phi_1^\mathcal{H}(\varphi_1 (s_*Y),Z) + 2\eta_{23}(\varphi_1 (s_*Y),Z))\\
&=2\alpha\eta_1(\hat{X})(g((s_*Y)_\mathcal{H},Z) - 2(\eta_3(s_*Y)\eta_3(Z)+\eta_2(s_*Y)\eta_2(Z)))\\
&=2\alpha\eta_1(\hat{X})(g((s_*Y)_\mathcal{H},Z) - 2g((s_*Y)_{\langle\xi_2,\xi_3\rangle},Z))\\
g(\tilde{\varphi}_1(\nabla_{\hat{X}}\overline{Y}),Z)&=-2\alpha\eta_1(\hat{X})(\Phi_1^\mathcal{H}(\overline{Y},\tilde{\varphi}_1Z) - 2\eta_{23}(\overline{Y},\tilde{\varphi}_1Z))\\
&=-2\alpha\eta_1(\hat{X})(\Phi_1^\mathcal{H}(\overline{Y},\varphi_1Z) + 2\eta_{23}(\overline{Y},\varphi_1Z))\\
&=2\alpha\eta_1(\hat{X})(g(\overline{Y},Z) - 2(\eta_{2}(\overline{Y})\eta_2(Z)+\eta_{3}(\overline{Y})\eta_3(Z)))\\
&=2\alpha\eta_1(\hat{X})(g((\overline{Y})_\mathcal{H},Z) - 2g((\overline{Y})_{\langle\xi_2,\xi_3\rangle},Z))
\end{align*}
Since $(s_*Y)_{\mathcal{H}\oplus\langle\xi_2,\xi_3\rangle}=\overline{Y}$ both terms cancel. Further, both $\tilde{\varphi}_1$ and $\nabla$ preserve the splitting $TM=\R\xi_1\oplus\langle \xi_2,\xi_3\rangle\oplus\mathcal{H}$, thus $\nabla_{s_*X}s_*Y-\nabla_{s_*X}\overline{Y}$ is vertical and 
\begin{align}
(\nabla^{\check{T}}_X J)Y=\pi_*(\nabla_{s_*X} (\tilde{\varphi}_1 (s_*Y)) - \tilde{\varphi}_1(\nabla_{s_*X} s_*Y))=\pi_*((\nabla_{s_*X}\tilde{\varphi}_1)s_*Y)=0.
\end{align}
In order to control the covariant derivative of $J$ with respect to the Levi-Civita connection we need to compute 
\begin{align*}
g((\check{T}_X\cdot J)Y,Z)=g(\check{T}_X\cdot (JY),Z)-g(J(\check{T}_X\cdot Y),Z)=\check{T}(X,JY,Z)+\check{T}(X,Y,JZ).
\end{align*}
This is linear so we may compute it for any combination of $X,Y,Z$ in either $\pi_*\mathcal{H}$ or $\pi_*\mathcal{V}$ individually. Note that $J$ preserves this splitting and $\pi^*\check{T}\in \mathcal{V}\wedge\Lambda^2\mathcal{H}$ by \eqref{checkT}. Thus we only need to check for these combination of vectors. Further, note that $g((\check{T}_X\cdot J)Y,Z)$ is skew-symmetric in $Y,Z$. Two cases remain. For $X\in \pi_*\mathcal{V}$ and $Y,Z\in \pi_*\mathcal{H}$ we have
\begin{align}\label{TJ1}
\begin{split}
g((\check{T}_X\cdot J)Y,Z)&=\check{T}(X,JY,Z)+\check{T}(X,Y,JZ)\\
&=2\alpha\sum_{i=2,3}\eta_i(\overline{X})(\Phi_i(\varphi_1s_*Y,\overline{Z})+\Phi_i(\overline{Y},\varphi_1s_*Z))\\
&=2\alpha\sum_{i=2,3}\eta_i(\overline{X})(g(\overline{Y},\varphi_i\varphi_1\overline{Z})-g(\overline{Y},\varphi_1\varphi_i\overline{Z}))\\
&=-4\alpha(\eta_2(\overline{X})\Phi_3(\overline{Y},\overline{Z})-\eta_3(\overline{X})\Phi_2(\overline{Y},\overline{Z})).
\end{split}
\end{align}
For $X,Z\in\pi_*\mathcal{H}$ and $Y\in\pi_*\mathcal{V}$
\begin{align}\label{TJ2}
\begin{split}
g((\check{T}_X\cdot J)Y,Z)&=\check{T}(X,JY,Z)+\check{T}(X,Y,JZ)\\
&=-2\alpha\sum_{i=2,3}\left(\eta_i(\tilde{\varphi}_1s_*Y)\Phi_i(\overline{X},\overline{Z})+\eta_i(\overline{Y})\Phi_i(\overline{X},\varphi_1s_*Z)\right)\\
&=-2\alpha(\eta_3(\overline{Y})\Phi_2(\overline{X},\overline{Z})-\eta_2(\overline{Y})\Phi_3(\overline{X},\overline{Z})\\
&\qquad - \eta_2(\overline{Y})\Phi_3(\overline{X},\overline{Z}) + \eta_3(\overline{Y})\Phi_2(\overline{X},\overline{Z}))\\
&=4\alpha(\eta_2(\overline{Y})\Phi_3(\overline{X},\overline{Z})-\eta_3(\overline{Y})\Phi_2(\overline{X},\overline{Z})).
\end{split}
\end{align}
This implies that $(\nabla^{g_N}_XJ)X=(\nabla^{\check{T}}_XJ)X-\frac12 (\check{T}_X\cdot J)X=0$. Indeed, \eqref{TJ2} proves that the $\pi_*\mathcal{H}\times\pi_*\mathcal{H}$-part is skew, and the sign difference between \eqref{TJ1} and \eqref{TJ2} shows that we are skew for mixed terms as well. Therefore $(N,g_N,J)$ is nearly Kähler.
\end{proof}

\begin{corollary}
The locally defined Riemannian submersion $\pi$ gives rise to a globally defined submersion $\pi\colon M\to N$ where $(N,g_N,J)$ is a nearly Kähler orbifold.
\end{corollary}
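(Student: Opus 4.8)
The plan is to split the globalization into two independent tasks: first, to check that every object produced by \autoref{parallelovernk} is invariant under the flow of $\xi=\xi_1$, so that the construction is really a construction on the leaf space of the foliation by $\xi_1$-orbits; and second, to identify that leaf space as a Riemannian orbifold.

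For the first task I would record that $\xi_1$ is a Killing field of unit length with geodesic orbits. Indeed, in the parallel case $\nabla\xi_1=0$, and since $\nabla$ is metric with totally skew torsion $g(\nabla^g_X\xi_1,Y)=-\tfrac12 T(X,\xi_1,Y)$ is antisymmetric in $X$ and $Y$, so $\mathcal L_{\xi_1}g=0$, while $g(\nabla^g_{\xi_1}\xi_1,Y)=-\tfrac12 T(\xi_1,\xi_1,Y)=0$ for all $Y$. Hence $g$ descends to the orbit space. By \eqref{Lvarphi} and the invariance of $\mathcal H$ and $\mathcal V$ under $\mathcal L_{\xi_1}$ — both already used in the proof of \autoref{parallelovernk} — one gets $\mathcal L_{\xi_1}\tilde\varphi=0$, so $\tilde\varphi$, and with it $J$ (which is $\tilde\varphi$ read off through any local section), descends; and $\nabla T=0$ together with $\nabla\xi_1=0$ gives $\mathcal L_{\xi_1}T=0$, so $\check T$ of \eqref{checkT} is globally well defined. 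Since $(\xi_1,\varphi_1,\eta_1)$ is a globally defined structure on $M$, no local choice is involved, and only the base of the submersion remains to be globalized.

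The second task rests on the observation that $\xi_1$ lies inside the three-dimensional Lie algebra of Killing fields $\mathfrak g=\operatorname{span}\{\xi_1,\xi_2,\xi_3\}$, which by \eqref{liexi} has brackets $[\xi_i,\xi_j]=2\delta\xi_k$ with $\delta=2\alpha\neq0$, hence $\mathfrak g\cong\mathfrak{su}(2)\cong\mathfrak{so}(3)$. Assuming $M$ complete, so that the $\xi_i$ are complete, Palais' integrability theorem yields an isometric action of the simply connected group $\mathrm{SU}(2)$, factoring through an effective action of $G\in\{\mathrm{SU}(2),\mathrm{SO}(3)\}$. Every one-parameter subgroup of such a rank-one compact group has closure a maximal torus, i.e.\ a circle, so $\exp(t\xi_1)$ is a closed circle subgroup $S^1\subset G$, and the $\xi_1$-orbits are exactly its orbits on $M$. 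As $\xi_1$ never vanishes the $S^1$-action is locally free, hence $N\coloneqq M/S^1$ is a Riemannian orbifold and the orbit map $\pi\colon M\to N$ is an orbifold Riemannian submersion agreeing locally with the map of \autoref{canonicalsub}. The $S^1$-invariant tensors $g$, $\tilde\varphi$, $\check T$ push down to $g_N$, $J$, $\check T$ on $N$, and since being nearly Kähler is a local condition, \autoref{parallelovernk} gives that $(N,g_N,J)$ is a nearly Kähler orbifold.

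The substantive point — hence the main obstacle — is the closedness of the $\xi_1$-orbits: for a generic unit Killing field with geodesic orbits (think of an irrational flow on a flat torus) the leaf space is not even Hausdorff, so one genuinely needs the input that $\xi_1$ generates a circle inside a compact group acting on $M$; this is where completeness and the $\mathfrak{su}(2)$-structure enter, and everything else is bookkeeping of invariances already established in \autoref{parallelovernk}. One should also note that orbifold singularities do occur in general, precisely at points whose $S^1$-stabilizer is a nontrivial finite cyclic subgroup.
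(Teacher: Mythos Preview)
Your proposal is correct and follows essentially the same line as the paper: the key step in both is that $\xi$ sits inside the $\mathfrak{su}(2)$ spanned by $\xi_1,\xi_2,\xi_3$, hence generates a one-parameter subgroup of a rank-one compact group, so its orbits are circles and the (locally free, since $\xi$ has unit length) $S^1$-action yields an orbifold quotient. Your write-up is more careful than the paper's terse version --- you make the completeness hypothesis and Palais' theorem explicit and spell out the invariance bookkeeping --- but the underlying argument is the same.
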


\begin{proof}
We need to proof that the $\R$-action generated by $\xi$ is locally free. Now $\xi$ generates a $1$-dimensional subgroup of the group $\mathrm{SU}(2)$ generated by $\mathcal{V}$. Therefore the orbits of $\xi$ are compact $S^1$, in particular the action is locally free.
\end{proof}

Coming from parallel $3$-$(\alpha,\delta)$-Sasaki manifolds these nearly Kähler spaces are rather special, inheriting additional properties.

\begin{prop}
The nearly Kähler spaces obtained through \autoref{parallelovernk} have reducible characteristic holonomy.
\end{prop}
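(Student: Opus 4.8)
The plan is to apply \autoref{holbelow} to a refinement of the holonomy-invariant splitting used in \autoref{parallelovernk} and then carry the resulting reducibility down to $N$ using that, by \autoref{parallelovernk}(c), the connection $\nabla^{\check{T}}$ \emph{is} the characteristic connection $\nabla^c$ of $(N,g_N,J)$.

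First I would exploit the parallel assumption $\delta=2\alpha$, i.e.\ $\beta=2(\delta-2\alpha)=0$: by \autoref{canconn} the canonical connection then satisfies $\nabla\varphi_i=0$, and hence also $\nabla\xi_i=\nabla\eta_i=0$, for every $i=1,2,3$. Fixing $\xi=\xi_1$, this makes the orthogonal decomposition $TM=\langle\xi_1\rangle\oplus\langle\xi_2,\xi_3\rangle\oplus\mathcal{H}$ invariant under $\mathrm{Hol}_0(\nabla)$. Next I would invoke \autoref{holbelow} with $\mathcal{V}=\langle\xi_1\rangle$, $\mathcal{H}_1=\langle\xi_2,\xi_3\rangle$ and $\mathcal{H}_2=\mathcal{H}$; here $\pi$ is exactly the submersion of \autoref{parallelovernk}, and \eqref{projecttau} holds automatically because $\mathcal{V}$ is one-dimensional. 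The one hypothesis that needs checking is that $\mathcal{H}_1$ and $\mathcal{H}_2$ are projectable, i.e.\ (since $\pi$ is only locally defined) that both are invariant under the flow of $\xi_1$: for $\mathcal{H}_2=\mathcal{H}$ this was already recorded in the proof of \autoref{parallelovernk} ($\mathcal{L}_{\xi_1}$ preserves $\mathcal{H}$ and $\mathcal{V}$), and for $\mathcal{H}_1=\langle\xi_2,\xi_3\rangle$ it is immediate from \eqref{liexi}, which gives $\mathcal{L}_{\xi_1}\xi_2=2\delta\xi_3$ and $\mathcal{L}_{\xi_1}\xi_3=-2\delta\xi_2$. \autoref{holbelow} then yields the orthogonal splitting $TN=\pi_*\langle\xi_2,\xi_3\rangle\oplus\pi_*\mathcal{H}$ into $\mathrm{Hol}_0(\nabla^{\check{T}})$-submodules.

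To finish I would simply note that \autoref{parallelovernk}(c) identifies $\nabla^{\check{T}}$ with $\nabla^c$, so this splitting reduces $\mathrm{Hol}_0(\nabla^c)$, which is the claim. I expect the only genuinely non-formal step to be the projectability of the two horizontal summands, which is why I would spell it out via \eqref{liexi}; everything else is a direct application of \autoref{holbelow} and \autoref{parallelovernk}(c). As a small bonus one sees along the way that $\tilde{\varphi}_1$ preserves both $\mathcal{H}$ and $\langle\xi_2,\xi_3\rangle$, so $J$ restricts to each factor and the characteristic holonomy in fact reduces to a subgroup of $\mathrm{U}(1)\times\mathrm{U}(\tfrac{1}{2}\dim N-1)$.
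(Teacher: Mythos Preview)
Your proposal is correct and follows essentially the same route as the paper: apply \autoref{holbelow} to the three-fold splitting $TM=\langle\xi_1\rangle\oplus\langle\xi_2,\xi_3\rangle\oplus\mathcal{H}$, using $\nabla\xi_i=0$ for holonomy invariance and \eqref{liexi} for projectability of $\langle\xi_2,\xi_3\rangle$, then identify $\nabla^{\check{T}}$ with $\nabla^c$ via \autoref{parallelovernk}(c). The only cosmetic difference is that the paper justifies projectability of $\mathcal{H}$ by the Killing property of $\xi_1$ rather than by pointing back into the proof of \autoref{parallelovernk}; your extra remark on the reduction to $\mathrm{U}(1)\times\mathrm{U}(\tfrac{1}{2}\dim N-1)$ is a nice addendum not in the paper.
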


\begin{proof}
As in the proof of \autoref{parallelovernk} we may assume that $\xi=\xi_1$. We show that the holonomy representation $TM=\langle\xi_1\rangle\oplus\langle\xi_2,\xi_3\rangle\oplus \mathcal{H}$ satisfies the conditions of \autoref{holbelow}. Since $\nabla$ preserves each Reeb vector field individually the aforementioned decomposition is $\mathrm{Hol}_0(\nabla)$-invariant. By \eqref{liexi} the distribution $\langle\xi_2,\xi_3\rangle$ is invariant under $\xi_1$ and, thus, projectable. As $\xi_1$ is Killing the same is true for $\mathcal{H}$.
\end{proof}

\begin{rem}
This shows that projectability is essential in \autoref{holbelow} as both $\xi_2,\xi_3$ are parallel with respect to $\nabla$ but their projections individually are not.
\end{rem}

Complete strictly nearly Kähler 6-folds with reducible characteristic holonomy were investigated by \cite{BelgunMoroianu}. They show that the only such manifolds are the twistor spaces $\mathbb{C}P^3$ and $F(1,2)$ with their standard nearly Kähler structures. More generally we see that $(N,g_N,J)$ is of special algebraic torsion in the notation of \cite{Nagy}. That is $\check{T}(X,Y)=0$ for $X,Y\in\pi_*\langle\xi_2,\xi_3\rangle$ and $\check{T}(X,Y)\subset \pi_*\langle\xi_2,\xi_3\rangle$ for any $X,Y\in\mathcal{H}$.  Indeed, from \eqref{checkT} the projections of $\check{T}$ to $\Lambda^3\pi_*\langle\xi_2,\xi_3\rangle$, $\Lambda^2\pi_*\langle\xi_2,\xi_3\rangle\wedge\pi_*\mathcal{H}$ and $\Lambda^3\pi_*\mathcal{H}$ all vanish. In \cite{Nagy} the author distinguishes types of nearly Kähler structures with reducible characteristic holonomy $TN=\check{\mathcal{V}}\oplus\check{\mathcal{H}}$ via eigenvalues of the endomorphism $F\colon \check{\mathcal{H}}\to\check{\mathcal{H}}$ defined by
\begin{align*}
F&\coloneqq-\sum(\nabla^{g_N}_{e_i}J)^2
\end{align*}
where the sum is taken over a basis $\{e_i\}$ of $\check{\mathcal{V}}$. In the situation at hand $\check{\mathcal{V}}=\pi_*\langle \xi_2,\xi_3\rangle$, $\check{\mathcal{H}}=\pi_*\mathcal{H}$ and
\begin{align*}
F&=-\sum_{i=2,3}(\nabla^{g_N}_{\xi_i\circ s}J)^2=-\sum_{i=2,3}((\xi_i\circ s)\intprod\check{T})^2=-4\alpha^2(\varphi_2^2+\varphi_3^2)=8\alpha^2\mathrm{id}_\mathcal{H}
\end{align*}
where we used $(\nabla J)J=-J(\nabla J)$ and \eqref{checkT}.

In his classification of nearly Kähler manifolds, \cite{Nagy}, Nagy shows that such a manifold, if complete, is either homogeneous of type $3$ in his notation or the twistor space of a quaternionic Kähler manifold. We prove with canonical submersions a local version similar to \cite[Corollary 7.7]{Alexandrov}.

\begin{theorem}
Let $(N,g_N,J)$ be a nearly Kähler manifold with characteristic connection $\nabla^{T^N}$. %Assume $\mathrm{Hol}_0(\nabla^{T^N})\subseteq \mathrm{U}(1)\times\mathrm{U}(n)$ with its standard representation on $TM=\check{\mathcal{V}}^2\oplus\check{\mathcal{H}}^{2n}$. If the characteristic torsion satisfies $T^N\in \Lambda^2\check{\mathcal{H}}\wedge\check{\mathcal{V}}$ then there is a locally defined Riemannian submersion $\pi\colon N\to \check{N}$ along $\check{\mathcal{V}}$. Furthermore,  if $F=k\mathrm{id}_\mathcal{H}$ for some $k>0$, $\check{N}$ admits a quaternionic Kähler structure locally defined by
Assume the tangent space $TN=\check{\mathcal{V}}\oplus\check{\mathcal{H}}$ splits into $\mathrm{Hol}_0(\nabla^{T^N})$- and $J$-invariant subsets such that the characteristic torsion satisfies $T^N\in \Lambda^2\check{\mathcal{H}}\wedge\check{\mathcal{V}}$. Then there is a locally defined Riemannian submersion $\pi\colon N\to \check{N}$ along $\check{\mathcal{V}}$. Furthermore,  if $F=k\mathrm{id}_{\check{\mathcal{H}}}$, $k>0$, and $\dim\check{\mathcal{V}}=2$ then $\check{N}$ admits a quaternionic Kähler structure locally defined by
\begin{gather*}
I_1=\pi_*\circ J\circ s_*,\quad I_2=\sqrt{\frac 2k}\pi_*\circ (JV\intprod T^N)\circ s_*,\quad I_3=\sqrt{\frac 2k}\pi_*\circ (V\intprod T^N)\circ s_*
\end{gather*}
for any section $s\colon \check{N}\to N$ of $\pi$ and vertical vector field $V\in\Gamma\check{\mathcal{V}}$ of norm $1$.
\end{theorem}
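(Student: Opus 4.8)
The plan is to apply \autoref{canonicalsub} to the splitting $TN = \mathcal{V}\oplus\mathcal{H}$, which is legitimate since $T^N \in \Lambda^2\mathcal{H}\wedge\mathcal{V}$ means the forbidden $\Lambda^2\mathcal{V}\wedge\mathcal{H}$-component vanishes (in fact the purely horizontal and purely vertical components vanish too). This immediately yields the locally defined Riemannian submersion $\pi\colon N\to\check{N}$ with totally geodesic fibers tangent to $\mathcal{V}$, and a connection $\nabla^{\check{T}}$ on $\check{N}$ with parallel skew torsion satisfying $\nabla^{\check{T}}_XY = \pi_*(\nabla^{T^N}_{\overline{X}}\overline{Y})$. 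Since $T^N$ has no purely horizontal part, $\check{T} = 0$, so $\nabla^{\check{T}} = \nabla^{g_{\check{N}}}$ is just the Levi-Civita connection of the base. Thus the real content is to show that the three endomorphisms $I_1, I_2, I_3$ defined on $\check{N}$ are well-defined (independent of $s$), satisfy the quaternion relations pointwise, and span a parallel subbundle $\mathcal{Q}\subset\mathrm{End}(T\check{N})$ under $\nabla^{g_{\check{N}}}$.

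First I would establish the pointwise algebra on $\mathcal{H}$ (identified with $T\check{N}$ via $\pi_*$). Write $P := V\intprod T^N$ and $Q := (JV)\intprod T^N$, viewed as skew-symmetric endomorphisms of $TN$ preserving $\mathcal{H}$ (they kill $\mathcal{V}$ since $T^N\in\Lambda^2\mathcal{H}\wedge\mathcal{V}$ and $\dim\mathcal{V}=2$). The hypothesis $F|_\mathcal{H} = k\,\mathrm{id}_\mathcal{H}$ together with $(\nabla^{g_N}V\intprod\cdot) = $ the torsion term and the nearly Kähler identity $(\nabla^{g_N}_XJ)J = -J(\nabla^{g_N}_XJ)$ should give $P^2 = Q^2 = -\tfrac{k}{2}\,\mathrm{id}_\mathcal{H}$, so that $I_2 = \sqrt{2/k}\,\pi_*\circ Q\circ s_*$ and $I_3 = \sqrt{2/k}\,\pi_*\circ P\circ s_*$ square to $-\mathrm{id}$. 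For the mixed relations I would use that $J$ anticommutes with $P$ (since $JP = J(V\intprod T^N)$ and the characteristic torsion relation $T^N(X,Y,Z) = g_N((\nabla^{g_N}_XJ)Y,Z)$ forces $JV\intprod T^N$ and $V\intprod T^N$ to intertwine via $J$ up to sign), yielding $I_1I_3 = -I_3I_1$ and $I_1I_3 = \pm I_2$, etc. This is the computational heart; it parallels the torsion manipulations in the proof of \autoref{parallelovernk}, especially the identities \eqref{TJ1} and \eqref{TJ2}, and the fact that $\varphi_i$ act as imaginary quaternions on $\mathcal{H}$.

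Next I would show $\mathcal{Q} = \mathrm{span}\{I_1,I_2,I_3\}$ is independent of the section $s$ and is $\nabla^{g_{\check{N}}}$-parallel. Independence of $s$ follows, as in \autoref{parallelovernk}, from $\mathcal{L}_V J = 0$, $\mathcal{L}_V T^N = 0$ and $\mathcal{L}_V(JV\intprod T^N)=0$ for $V\in\mathcal{V}$ unit — these Lie-derivative vanishings come from $\nabla^{T^N} J = 0$, $\nabla^{T^N}T^N=0$, the fact that $\mathcal{V}$ is $\nabla^{T^N}$-invariant of dimension $2$ (so $\nabla^{T^N}$ rotates $V$ within $\mathcal{V}$, and $F = k\,\mathrm{id}$ is $\nabla^{T^N}$-parallel on $\mathcal{H}$), and $V$ being Killing. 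For parallelism I would use \autoref{holbelow}-style reasoning, or more directly: $\nabla^{g_{\check{N}}}_X I_1 = \pi_*\circ(\nabla^{T^N}_{\overline{X}}\tilde{\cdot})$ applied to $J$, and compute $(\nabla^{g_{\check{N}}}_X I_1)Y$ exactly as the $(\nabla^{\check{T}}_X J)Y$ computation was done in \autoref{parallelovernk} using \autoref{nablavert}, showing the result lies in $\mathrm{span}\{I_2,I_3\}$ (rather than vanishing, since now the base torsion is zero but the horizontal lift of $J$ is not $\nabla^{T^N}$-parallel). Similarly $\nabla^{g_{\check{N}}}I_2, \nabla^{g_{\check{N}}}I_3$ stay in $\mathcal{Q}$. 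Finally, when $\dim\check{N} = 4$ I would verify the Einstein and anti-self-dual conditions separately, using that $N$ is Einstein (nearly Kähler of the relevant type) and the O'Neill submersion formulas relate the curvatures.

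The main obstacle I anticipate is the pointwise quaternion algebra: one must extract from the single scalar hypothesis $F = k\,\mathrm{id}_\mathcal{H}$ (rather than from an assumed $3$-structure upstairs) both that $P^2 = Q^2 = -\tfrac k2\mathrm{id}$ and the anticommutation $PQ = -QP = \pm\tfrac k2 J$. This requires carefully unwinding the characteristic torsion identity $T^N(X,Y,Z) = g_N((\nabla^{g_N}_XJ)Y,Z)$, the first Bianchi identity \eqref{Bianchi} for $\nabla^{T^N}$, and the nearly Kähler anticommutation $(\nabla^{g_N}_XJ)\circ J = -J\circ(\nabla^{g_N}_XJ)$ — showing in particular that the "off-diagonal" pieces of $F$ between the two directions in $\mathcal{V}$ vanish and assemble correctly. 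The rest is bookkeeping with horizontal lifts, \autoref{nablavert}, and O'Neill's formulas, entirely analogous to machinery already deployed in \autoref{parallelovernk}.
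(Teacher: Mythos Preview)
Your proposal is correct and follows essentially the same route as the paper: apply \autoref{canonicalsub} (noting $\check{T}=0$), verify the quaternion relations on $\mathcal{H}$, and then compute $\nabla^{g_{\check{N}}}I_a$ via \autoref{nablavert} to see that $\mathcal{Q}$ is parallel. The ``main obstacle'' you anticipate is lighter than you fear: the paper dispatches the quaternion algebra in two lines using only the nearly K\"ahler identity $(\nabla^{g_N}_{JV}J)=(\nabla^{g_N}_VJ)J=-J(\nabla^{g_N}_VJ)$, which immediately yields $(\nabla^{g_N}_{JV}J)^2=(\nabla^{g_N}_VJ)^2=-\tfrac{k}{2}\,\mathrm{id}_{\mathcal{H}}$ and the anticommutation $JP=-Q$, so no Bianchi identity or off-diagonal analysis of $F$ is needed. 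Conversely, the paper omits both the independence-of-$s$ argument and the $4$-dimensional Einstein/anti-self-dual verification that you (rightly) flag.
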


\begin{rem}
As $\mathrm{Hol}_0(\nabla^{T^N})\subset\mathrm{U}(n)$, the splitting $TN=\check{\mathcal{V}}^2\oplus\check{\mathcal{H}}^{2n-2}$ into $J$- and $\mathrm{Hol}_0(\nabla^{T^N})$-invariant modules is equivalent to $\mathrm{Hol}_0(\nabla^{T^N})\subseteq\mathrm{U}(1)\times\mathrm{U}(n-1)$ with its standard representation on $TN$. The result in \cite{Alexandrov} used this assumption and the following assumption on the complex type \begin{align*}
T^N\in \Lambda^{2,0}\check{\mathcal{H}}\wedge\Lambda^{1,0}\check{\mathcal{V}}\oplus \Lambda^{0,2}\check{\mathcal{H}}\wedge\Lambda^{0,1}\check{\mathcal{V}}.
\end{align*}
\end{rem}

\begin{proof}
$(N,g_N,\nabla^{T^N})$ satisfies the conditions in \autoref{canonicalsub} so we obtain a locally defined Riemannian submersion $\pi\colon N\to\check{N}$ with totally geodesic fibers tangent to $\mathcal{V}$ such that
\begin{align*}
\nabla^{g_{\check{N}}}_XY=\pi_*(\nabla^{T^N}_{\overline{X}}\overline{Y}).
\end{align*}
We check that $I_1,I_2,I_3$ satisfy the quaternion relations. As in \autoref{parallelovernk} we see immediately $I_1^2=-\mathrm{id}$. Observe that
\begin{align*}
(\nabla^{g_N}_{JV}J)^2=(\nabla^{g_N}_{V}J)J(\nabla^{g_N}_{V}J)J=-(\nabla^{g_N}_{V}J)^2J^2=(\nabla^{g_N}_{V}J)^2=-\frac 12 F=-\frac{k}{2}\mathrm{id}
\end{align*}
since $V, JV$ form an orthonormal base of $\check{\mathcal{V}}$. It follows that 
\begin{align}\label{nablaJ2}
(V\intprod T^N)^2=((\nabla^{g_N}_VJ)J)^2=-\frac{k}{2}\mathrm{id}
\end{align}
and analogously for $(JV\intprod T^N)^2$. Therefore $I_2,I_3$ are almost complex structures on $\check{N}$. The quaternionic relations follow immediately from 
\begin{align}\label{nablagnJ}
J(V\intprod T^N)=J(\nabla^{g_N}_V J)J=-(\nabla^{g_N}_{JV}J)J=-(JV\intprod T^N)
\end{align}
 and \eqref{nablaJ2}.

It remains to show that $\nabla^{g_{\check{N}}}$ preserves the subbundle of $\mathrm{End}(T\check{N})$ generated by $I_1,I_2,I_3$. We proceed as in \autoref{parallelovernk} and set $\hat{X}\coloneqq\overline{X}-s_*X$. Then%From \autoref{nablavert} we obtain for an
\begin{align*}
(\nabla^{g_{\check{N}}}_XI_1)Y&=\nabla^{g_{\check{N}}}_X(I_1Y)-I_1(\nabla^{g_{\check{N}}}_XY)=\pi_*(\nabla^{T^N}_{\overline{X}}\overline{(\pi_*(J(s_*Y)))}-J(s_*(\pi_*(\nabla^{T^N}_{\overline{X}}\overline{Y}))))\\
&=\pi_*(\nabla^{T^N}_{s_*X}(J(s_*Y)))+\nabla^{T^N}_{\hat{X}}\overline{(\pi_*(J(s_*Y)))}-J(\nabla^{T^N}_{s_*X}\overline{Y})-J(s_*(\pi_*(\nabla^{T^N}_{\hat{X}}\overline{Y})))\\
&=\pi_*((\nabla^{T^N}_{s_*X}J)(s_*Y)+(\hat{X}\intprod T^N)(J(s_*Y))-J(\hat{X}\intprod T^N)(s_*Y))\\
&=2\pi_*((J\hat{X}\intprod T^N)(s_*Y))
\end{align*}
where we made use of \autoref{nablavert}. This shows $\nabla^{g_{\check{N}}}I_1\in\langle I_2,I_3\rangle$ since $\hat{X}\in\check{\mathcal{V}}=\langle V,JV\rangle$. We play the game once more for $I_3$. Then the covariant derivative of $I_2$ is computed completely analogously.
\begin{align*}
(\nabla^{g_{\check{N}}}_XI_3)Y&=\nabla^{g_{\check{N}}}_X(I_3Y)-I_3\nabla^{g_{\check{N}}}_XY\\
&=\sqrt{\frac 2k}\pi_*(\nabla^{T^N}_{\overline{X}}\overline{(\pi_*((V\intprod T^N)(s_*Y)))}-(V\intprod T^N)(s_*(\pi_*(\nabla^{T^N}_{\overline{X}}\overline{Y}))))\\
&=\sqrt{\frac 2k}\pi_*\big(\nabla^{T^N}_{s_*X}((V\intprod T^N)(s_*Y))+\nabla^{T^N}_{\hat{X}}\overline{(\pi_*((V\intprod T^N)(s_*Y)))}\\
&\qquad\qquad\quad-(V\intprod T^N)(\nabla^{T^N}_{s_*X}\overline{Y})-(V\intprod T^N)(s_*(\pi_*(\nabla^{T^N}_{\hat{X}}\overline{Y})))\big)\\
&=\sqrt{\frac 2k}\pi_*\big((\nabla^{T^N}_{s_*X}(V\intprod T^N))(s_*Y)+(\hat{X}\intprod T^N)(V\intprod T^N)(s_*Y)\\
&\qquad\qquad\quad-(V\intprod T^N)(\hat{X}\intprod T^N)(s_*Y)\big)\\
&=\sqrt{\frac 2k}\pi_*(((\nabla^{T^N}_{s_*X}V)\intprod T^N)(s_*Y))+\sqrt{2k}g_N(JV,\hat{X})I_1Y
\end{align*}
where in the last step we have used \eqref{nablaJ2} and \eqref{nablagnJ} to conclude
\begin{align*}
(\hat{X}\intprod T^N)(V\intprod T^N)-&(V\intprod T^N)(\hat{X}\intprod T^N)\\
&=g(V,\hat{X})((V\intprod T^N)^2-(V\intprod T^N)^2)\\
&\quad+g(JV,\hat{X})((JV\intprod T^N)(V\intprod T^N)-(V\intprod T^N)(JV\intprod T^N))\\
&=-g(JV,\hat{X})(J(V\intprod T^N)^2+(V\intprod T^N)^2J)\\
&=kg(JV,\hat{X})J.
\end{align*}
Now the result follows as $\nabla^{T^N}_{s_*X}V\in\check{\mathcal{V}}=\langle V,JV\rangle$.
\end{proof}	

%\begin{rem}
%Note that the expressions for $\nabla^{g_{\check{N}}}_XI_2$ and $\nabla^{g_{\check{N}}}_XI_3$ can be drastically simplified if we choose $V$ well. For instance, suppose $V$ is Killing, e.g.~if $V=\pi_*\xi_2$ from \autoref{parallelovernk}. Then $g(\nabla^{T^N}_{s_*X}V,V)=g(\nabla^{g_N}_{s_*X}V,V)+\frac 12T^N(s_*X,V,V)=0$ and therefore
%\begin{align*}
%\nabla^{g_{\check{N}}}_XI_3=g(\nabla_{s_*X}^{T^N}V,JV)I_3+\sqrt{2k}g(JV,\hat{X})I_1
%\end{align*}
%
%The expressions for $\nabla^{g_{\check{N}}}_XI_2$ and $\nabla^{g_{\check{N}}}_XI_3$ do not look particularly nice. However, suppose that $V$ is a Killing field for $g_N$, for instance, if $V=\pi_*\xi_2$ from \autoref{parallelovernk}. Then $g(\nabla^{T^N}_{s_*X}V,V)=g(\nabla^{g_N}_{s_*X}V,V)+\frac 12T^N(s_*X,V,V)=0$ and thereby $\nabla^{g_{\check{N}}}_XI_2\in\langle I_1,I_3\rangle$.
%\end{rem}

\textbf{Acknowledgments:} I would like to thank Giovanni Russo for many helpful discussions on the topic.

\textbf{Funding information:} The author states no funding involved.

\textbf{Conflict of interest:} The author states no conflicting interests.

\textbf{Author contributions:} The author confirms sole responsibility for the material and manuscript.

\printbibliography
\textsc{Leander Stecker, Universität Leipzig,\\ Augustusplatz 10, 04019 Leipzig, Germany} \\
\textit{E-mail address}: \texttt{leander.stecker@uni-leipzig.de}

\end{document}